\crefname{subsection}{subsection}{subsections}
\crefname{lem}{lemma}{lemmas}
\theoremstyle{definition}
\newtheorem{thm}{Theorem}[section]
\newtheorem{cor}[thm]{Corollary}
\newtheorem{lem}[thm]{Lemma}
\newtheorem{quest}[thm]{Question}
\newtheorem{exmp}[thm]{Example}
\newtheorem{obs}[thm]{Observation}
\newtheorem{rmk}[thm]{Remark}
\newtheorem{claim}[thm]{Claim}
\newtheorem*{ack*}{Acknowledgements}
\newcommand{\co}{\operatorname{co}}
\Crefname{subsection}{Section}{Sections}
\begin{document}

\title{Sharp stability of Brunn-Minkowski for homothetic regions}
\author{Peter van Hintum \and Hunter Spink \and Marius Tiba}
\email{pllv2@cam.ac.uk, hspink@math.harvard.edu, mt576@dpmms.cam.ac.uk}
\thanks{The first and second author would like to thank their respective institutions Clare College, University of Cambridge and Harvard University. The third author is grateful for the Trinity Hall research studentship for providing research funding.}

\begin{abstract}
We prove a sharp stability result concerning how close homothetic sets attaining near-equality in the Brunn-Minkowski inequality are to being convex. In particular, resolving a conjecture of Figalli and Jerison, we show there are universal constants $C_n,d_n>0$ such that for $A \subset \mathbb{R}^n$ of positive measure, if $|\frac{A+A}{2}\setminus A| \le d_n |A|$, then $|\co(A)\setminus A| \le C_n |\frac{A+A}{2}\setminus A|$ for $\co(A)$ the convex hull of $A$.
\end{abstract}

\maketitle

\section{Introduction}

Let $A,B\subset \mathbb{R}^n$ be measurable sets, which we will always assume throughout to have positive measure. The Brunn-Minkowski inequality states that $$|A+B|^{\frac{1}{n}} \ge |A|^{\frac{1}{n}}+|B|^{\frac{1}{n}}$$ for $|\cdot |$ the outer Lebesgue measure. Equality is known to hold if and only if $A$ and $B$ are homothetic copies of the same convex body (less a measure $0$ set). A natural question is whether this inequality is stable: if we are close to equality in the Brunn-Minkowski inequality, are $A$ and $B$ close to homothetic copies of the same convex body? More precisely, we want to know if $$\omega=\min_{\substack{K_A \supset A, K_B\supset B\\K_A,K_B\text{ homothetic convex sets}}}\max\left\{\frac{|K_A\setminus  A|}{|A|},\frac{|K_B \setminus B|}{|B|}\right\}$$ is bounded above in terms of the quantities $$\delta'=\frac{|A+B|^{\frac{1}{n}}}{|A|^{\frac{1}{n}}+|B|^{\frac{1}{n}}}-1,\text{ and }t=\frac{|A|^{\frac{1}{n}}}{|A|^{\frac{1}{n}}+|B|^{\frac{1}{n}}}.$$ The bound should be positively correlated with $\delta'$, and negatively correlated with $\min(t,1-t)$ (as when $\min(t,1-t)$ is smaller the volumes of $A,B$ are more disproportionate).

In this paper we prove the following sharp stability result for the Brunn-Minkowski inequality in the particular case that $A,B$ are homothetic sets. Taking $A=B$ resolves a conjecture of Figalli and Jerison \cite{Semisum}.

\begin{thm}
\label{introthm}
For all $n \ge 2$, there is a (computable) constant $C_n'>0$ and (computable) constants $d_n(\tau)>0$ for each $\tau \in (0,\frac{1}{2}]$ such that the following is true. With the notation above, if $\tau \in (0,\frac{1}{2}]$ and $A,B\subset \mathbb{R}^n$ are measurable homothetic sets such that $t \in [\tau,1-\tau]$ and $\delta' \le d_n(\tau)$, then 
$$\omega\le C_n'\tau^{-1}\delta'.$$
\end{thm}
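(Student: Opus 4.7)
The plan is to work directly with the homothetic pair $(A,B)$, proving the stability by induction on dimension together with a bootstrap from a weaker subpolynomial stability estimate; the factor $\tau^{-1}$ will emerge from the analysis. The case $A=B$ (which yields the Figalli--Jerison conjecture) is recovered by taking $t=\tfrac12$.

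\textbf{Setup.} Since $A,B$ are homothetic, after translation we may assume $B=\lambda A$ with $\lambda=(1-t)/t$. Rescaling, $\delta'$ measures how much $|tA+(1-t)A|$ exceeds $|A|$: since $A\subseteq tA+(1-t)A$, one has $|tA+(1-t)A\setminus A|=((1+\delta')^n-1)|A|\lesssim n\delta'|A|$ in the regime $\delta'\le d_n(\tau)$. The goal becomes $|\co(A)\setminus A|/|A|\le C_n'\tau^{-1}\delta'$, a linear stability statement for the $t$-weighted Minkowski defect.

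\textbf{Induction on $n$.} Base $n=1$: a Freiman-type analysis on the real line. If $|tA+(1-t)A\setminus A|$ is small and $t\in[\tau,1-\tau]$, then $A\subset\mathbb{R}$ is contained in an interval with excess bounded linearly in $|tA+(1-t)A\setminus A|$; the factor $\tau^{-1}$ arises because a $t$-weighted midpoint connecting two components of $A$ requires a pair of points with prescribed weights, and the effective density of such pairs degrades as $t\to 0$ or $t\to 1$. For $n\ge 2$, slice $A$ into fibers $A_y\subset\mathbb{R}$ along a carefully chosen direction (to avoid thin fibers). Applying Brunn--Minkowski fiberwise, the defect $|tA+(1-t)A\setminus A|$ simultaneously controls an $(n-1)$-dimensional Brunn--Minkowski defect of the fiber-length function $y\mapsto|A_y|$ (to which the inductive hypothesis applies with the same $t$) and the sum of fiberwise $1$D defects (handled by the base case). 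Combining these yields linear stability in dimension $n$.

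\textbf{Bootstrap.} The inductive scheme above will typically only deliver a subpolynomial estimate $|\co(A)\setminus A|/|A|\lesssim(\delta')^{c}$ for some $c=c_n<1$, because of cumulative loss when iterating over dimension. I would close this gap via a bootstrap from existing weaker stability (Figalli--Jerison and related work): the weak estimate already places $A$ in a thin neighborhood of $\co(A)$, after which a slab decomposition parallel to the slicing direction allows one to show that missing mass near $\partial\co(A)$ generates proportional mass in $tA+(1-t)A\setminus A$. Concretely, the $(t,1-t)$-weighted average of a point of $A$ near the relevant boundary facet and a generic interior point of $A$ lands in the missing slab, and summing across slabs should recover $|\co(A)\setminus A|$ linearly.

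\textbf{Main obstacle.} The principal difficulty is the bootstrap step: organizing the slab accounting to be loss-free and uniform in $n$. Missing mass near $\partial\co(A)$ must be counted without overlap between slabs, contributions from short fibers or awkward boundary geometry must be controlled, and the $\tau^{-1}$ weighting must be tracked carefully through all of this. A canonical choice of slicing direction adapted to the structure of $\co(A)$, together with a precise geometric count of $A\times A$-preimages of defect points near $\partial\co(A)$, is where the technical heart of the proof should lie; earlier approaches that achieve only subpolynomial stability lose a factor $(\delta')^{1-c}$ precisely at this step, and the sharp argument must eliminate that loss.
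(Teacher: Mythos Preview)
Your proposal is not a proof but a plan, and the plan has a genuine gap at exactly the point you flag. The fiberwise/inductive scheme you outline is essentially the approach of Figalli--Jerison's earlier work, and as you yourself note it only yields a subpolynomial exponent; their intricate version of the ``slab accounting'' bootstrap you describe was pushed through only for $n\le 3$ and does not extend. You have not supplied the mechanism that eliminates the $(\delta')^{1-c}$ loss---phrases like ``summing across slabs should recover $|\co(A)\setminus A|$ linearly'' and ``is where the technical heart of the proof should lie'' are not arguments. In particular, the difficulty is not merely organizational: when $\co(A)$ has many facets and the missing mass is spread among them, there is no single slicing direction adapted to the geometry, and the overlap/short-fiber issues you mention are precisely what defeated previous attempts.

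The paper's proof proceeds along entirely different lines and avoids induction on dimension altogether. After a reduction (using only a qualitative stability bound as input) to the case $\co(A)=T$ a simplex with vertices in $A$, the key new idea is a \emph{fractal homogeneity} lemma: one builds families $\mathcal{T}_{i,k}$ of translates of $(1-t)^iT$ inside $T$, obtained by iterated $(t,1-t)$-averaging of the corner copies, and shows inductively that each $T'\in\mathcal{T}_{i,k}$ satisfies $|T'\setminus A|\le \frac{|T'|}{|T|}|T\setminus A|+(i+2k)\,\delta(A;t)$. One then uses Rogers' covering theorem to cover a thin shell $T\setminus R$ near $\partial T$ by elements of $\mathcal{T}_{i,k}$ with total volume at most $\tfrac12|T|$ and cardinality $\le (2n)^{5n}$; since $R\subset D(A;t)$ once $\delta$ is small, summing the homogeneity inequality over the cover gives $|T\setminus D(A;t)|\le\tfrac12|T\setminus A|+O_n(t^{-1})\delta(A;t)$, hence $|T\setminus A|\le C_n\tau^{-1}\delta(A;t)$. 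None of these ingredients---the self-similar averaging, the quantitative density of $\mathcal{T}_{i,k}$ among all translates, or the efficient shell covering---appear in your outline.
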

For these optimal exponents, we also show that  $e^{\Omega(n)}\le C_n' \le e^{O(n\log n)}$ with explicit constants. We discuss this further in \Cref{CnSharp}.


 The most general stability result for the Brunn-Minkowski inequality was proved in a landmark paper by Figalli and Jerison \cite[Theorem 1.3]{BM}.
There they showed that for arbitrary measurable sets $A,B\subset \mathbb{R}^n$, there exists (computable) constants $\beta_n,C_n>0$ and $\alpha_n(\tau),d_n(\tau)>0$ for each $\tau \in (0,\frac{1}{2}]$ such that if $t \in [\tau,1-\tau]$ and $\delta' \le d_n(\tau)$ then
$$\omega \le C_n\tau^{-\beta_n}\delta'^{\alpha_n(\tau)}$$ (prior to this result, Christ \cite{Christ} had proved a non-computable non-polynomial bound involving $\delta'$ and $\tau$ via a compactness argument). A natural question is therefore to find the optimal exponents of $\delta'$ and $\tau$, prioritized in this order. This question, with $A,B$ restricted to various sub-classes of geometric objects, is the subject of a large body of literature. These optimal exponents potentially depend on which class of objects is being considered. For arbitrary measurable $A,B$ the question is still wide open. Our result is the first sharp stability result of its kind which does not require one of $A,B$ to be convex. 

Most of the literature focuses on upper bounding a measure closely related to $\omega$ for how close $A,B$ are to the same convex set, namely the \emph{asymmetry index} \cite{Figalli10amass}
$$\alpha(A,B)=\inf_{x \in \mathbb{R}^n}\frac{|A\mathrel{\Delta} (s\cdot \co(B)+x)|}{|A|}$$
where $\co(B)$ is convex hull of B, and $s$ satisfies $|A|=|s\cdot \co(B)|$. We always have $\alpha(A,B) \le 2\omega$, so bounding the asymmetry index is weaker than bounding $\omega$.
When $A$ and $B$ are convex, the optimal inequality $\alpha(A,B)\le C_n\tau^{-\frac{1}{2}}\delta'^{\frac{1}{2}}$ was obtained by Figalli, Maggi, and Pratelli in \cite{Figalli09,Figalli10amass}. When $B$ is a ball and $A$ is arbitrary, the optimal inequality $\alpha(A,B)\le C_n\tau^{-\frac{1}{2}}\delta'^{\frac{1}{2}}$ was obtained by Figalli, Maggi, and Mooney in \cite{Euclidean}. We note that this particular case is intimately connected with stability for the isoperimetric inequality. When just $B$ is convex the (non-optimal) inequality 
$\alpha(A,B)\le C_n\tau^{-(n+\frac{3}{4})}\delta'^{\frac{1}{4}}$ was obtained by Carlen and Maggi in \cite{Oneconvex}. Finally, Barchiesi and Julin \cite{Barchiesi} showed that when just $B$ is convex, we have the optimal inequality $\alpha(A,B)\le C_n\tau^{-\frac{1}{2}}\delta'^{\frac{1}{2}}$, subsuming these previous results.

In \cite{Fig} Figalli and Jerison gave an upper bound for $\omega$ when $A=B$, and later in \cite{Semisum} they conjectured the sharp bound $\omega \le C_n\delta'$ when $A=B$. This conjecture was proved in \cite{Semisum} for $n \le 3$ using an intricate analysis which unfortunately does not extend beyond this case. Later on, Figalli and Jerison suggested a stronger conjecture that $\omega \le C_n\tau^{-1}\delta'$ for $A,B$ homothetic regions, which we will prove in this paper.

Because previous sharp exponent results have taken at least one of $A,B$ to be convex, allowing for the use of robust techniques from convex geometry, the implicit hope was that solving this special case would shine a light on the general case where previous methods are not as applicable. The methods we use are indeed very different from the ones from convex geometry and, after an initial reduction, from \cite{Semisum}. We hope that these new techniques, particularly the fractal and the boundary covering detailed in \Cref{Outline}, can provide new insight into finding optimal exponents for general $A,B$.

\subsection{Main theorem}
As we are considering homothetic regions $A,B$, we can replace $A$ with $tA$ and $B$ with $(1-t)A$. Note that $t$ retains its earlier meaning as $t=\frac{|tA|^{\frac{1}{n}}}{|tA|^{\frac{1}{n}}+|(1-t)A|^{\frac{1}{n}}}.$
 Define the \emph{interpolated sumset} of $A$ as $$D(A;t):=tA+(1-t)A=\{ta_1+(1-t)a_2\mid a_1,a_2 \in A\}.$$
Note that we always have $A\subset D(A;t)$. To quantify how small $D(A;t)$ is, we introduce the expression $$\delta(A;t):=|D(A;t)\setminus A|.$$ 
As a further simplification, we note that
$$\delta'=\frac{|D(A;t)|^{\frac{1}{n}}}{|A|^{\frac{1}{n}}}-1=\left(\frac{1}{n}+o(1)\right)\left(\frac{|D(A;t)|}{|A|}-1\right)=\left(\frac{1}{n}+o(1)\right)\frac{\delta(A;t)}{|A|}$$ where $o(1)$ depends on the upper bound on $\delta'$. Since the exponent of $\delta'$ is always at most $1$ (as shown by \Cref{sharpexmp}), we may work with $\frac{\delta(A;t)}{|A|}$ in place of $\delta'$ by absorbing the $\frac{1}{n}+o(1)$ term into $C_n'$ to make a new constant $C_n$.

The following is the specialization of \cite[Theorem 1.3]{BM} to homothetic $A,B$, which we restate for the reader's convenience. 
\begin{thm}\label{BrunnMinkCor}(Figalli and Jerison \cite{BM}) For $n \ge 2$ there are (computable) constants $\beta_n,C_n>0$, and (computable) constants $\alpha_n(\tau),d_n(\tau)>0$ for each $\tau \in (0,\frac{1}{2}]$, such that the following is true. If $A\subset \mathbb{R}^n$ is a measurable set, $\tau \in (0,\frac{1}{2}]$ and $t \in [\tau,1-\tau]$, then
$$|\co(A)\setminus A| \le C_n |A|\tau^{-\beta_n}\left(\frac{\delta(A;t)}{|A|}\right)^{\alpha_n(\tau)}$$ whenever $\delta(A;t) \le d_n(\tau)|A|$.
\end{thm}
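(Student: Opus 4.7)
The plan is to deduce this as a direct corollary of the general Figalli--Jerison stability theorem \cite[Theorem 1.3]{BM}. Given $A \subset \mathbb{R}^n$, $\tau \in (0,\frac{1}{2}]$, and $t \in [\tau, 1-\tau]$, I would apply that theorem to the homothetic pair $A' := tA$ and $B' := (1-t)A$. Then $A' + B' = D(A;t)$, and a direct computation shows $|A'|^{1/n}/(|A'|^{1/n} + |B'|^{1/n}) = t$, so the role of the $t$ parameter transfers correctly between the two formulations.

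The general theorem produces homothetic convex sets $K_{A'} \supset A'$ and $K_{B'} \supset B'$ with
$$\frac{|K_{A'} \setminus A'|}{|A'|} \le \omega \le C_n \tau^{-\beta_n} (\delta')^{\alpha_n(\tau)},$$
where $\delta' = |D(A;t)|^{1/n}/|A|^{1/n} - 1$, valid as soon as $\delta' \le d_n(\tau)$. The key observation is that $K_{A'}$, being a convex set containing $A'$, also contains $\co(A') = t\,\co(A)$. Scaling back by $t^{-1}$ gives
$$|\co(A) \setminus A| = t^{-n}|\co(A') \setminus A'| \le t^{-n}|K_{A'} \setminus A'| \le t^{-n}\omega|A'| = \omega|A|.$$

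To conclude, I would relate $\delta'$ to $\delta(A;t)/|A|$. Shrinking $d_n(\tau)$ if necessary so that $\delta(A;t)/|A|$ is bounded by a small absolute constant, the concavity inequality $(1+x)^{1/n} - 1 \le x/n$ for $x \ge 0$ gives $\delta' \le \frac{1}{n}\delta(A;t)/|A|$. Substituting into the previous display and absorbing the resulting $n$-dependent factor into a new $C_n$ yields the stated inequality. No substantial obstacle arises here: the corollary is essentially bookkeeping, with all the real content sitting in \cite[Theorem 1.3]{BM}; the only additional observation needed is that $\co(A) \subset K_A$ for any convex $K_A$ containing $A$, which makes bounding $|\co(A)\setminus A|$ weaker than bounding $\omega$.
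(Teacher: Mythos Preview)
Your proposal is correct and matches exactly what the paper does: the theorem is not proved in the paper but is simply stated as ``the specialization of \cite[Theorem 1.3]{BM} to homothetic $A,B$'', and your argument carries out precisely that specialization (apply the general result to $A'=tA$, $B'=(1-t)A$, use $\co(A')\subset K_{A'}$, and convert $\delta'$ to $\delta(A;t)/|A|$). There is nothing more to add.
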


Our main result optimizes the exponents to be $\alpha_n=\beta_n=1$ in \Cref{BrunnMinkCor}, verifying the conjecture of \cite{Semisum} and the further generalization to homothetic sets suggested by Figalli and Jerison.

\begin{thm}(\Cref{introthm} reformulated)
\label{mainthm}
For all $n\ge 2$, there is a (computable) constant $C_n>0$ (we can take $C_n=(4n)^{5n}$), and (computable) constants $\Delta_n(\tau)>0$ for each $\tau \in (0,\frac{1}{2}]$ such that the following is true. If $A\subset \mathbb{R}^n$ is a measurable set, $\tau\in (0,\frac{1}{2}]$
and $t \in [\tau,1-\tau]$, then
$$|\co(A)\setminus A| \le C_n\tau^{-1}\delta(A;t)$$
whenever $\delta(A;t)\le \Delta_n(\tau)|A|$.
\end{thm}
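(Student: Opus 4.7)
The plan is to combine the non-sharp polynomial bound of Figalli and Jerison (\Cref{BrunnMinkCor}) with a bootstrap scheme that upgrades both exponents to their sharp values. Applying \Cref{BrunnMinkCor} first reduces to the regime where $A$ is already $\varepsilon$-close to its convex hull $K := \co(A)$, for $\varepsilon$ as small as required. After an affine transformation (which preserves both sides of the target inequality), we may additionally normalize $K$ so that its John ellipsoid is the unit ball, absorbing a dimension-dependent factor into $C_n$.

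Since $A \subseteq D(A;t) \subseteq \co(A)$, the telescoping identity
$$|\co(A) \setminus A| = |\co(A) \setminus D(A;t)| + |D(A;t) \setminus A|$$
shows that the desired bound is equivalent to showing $|\co(A) \setminus D(A;t)|$ is at most a constant times $\tau^{-1} \delta(A;t)$; that is, the interpolated sumset $D(A;t)$ must fill all of $\co(A)$ up to a deficit commensurate with $\delta(A;t)$ itself. The key geometric observation is that $x \in D(A;t)$ if and only if the affine copy $A_x := \tfrac{x - tA}{1-t}$ meets $A$, so $x \notin D(A;t)$ forces two homothets of $A$ to sit disjointly inside $K$. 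The boundary-covering technique then proceeds by covering $\co(A) \setminus D(A;t)$ by a controlled family of caps of $K$, and for each cap converting the disjointness $A \cap A_x = \emptyset$ into mass that either lies directly in $D(A;t) \setminus A$ or can be transported there via translation along the cap direction.

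The fractal/recursive step handles the possibility that the missing regions occur at many scales. If the sharp bound fails for some $A$ with ratio $R := |\co(A)\setminus A|/\delta(A;t)$ much larger than $C_n \tau^{-1}$, I would localize to the worst boundary region, rescale it to play the role of a new convex hull, and produce a smaller set $A'$ for which $R' \geq R$ but with substantially larger relative defect $\delta(A';t)/|A'|$. Iterating this zoom-in on the offending region generates a sequence of sets whose relative defects grow while their ratios remain unbounded, eventually violating the polynomial bound of \Cref{BrunnMinkCor} and yielding the desired contradiction.

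The hardest step is matching the sharp $\tau^{-1}$ factor rather than a worse power $\tau^{-\beta_n}$. This factor is forced by extreme values of $t$, where the scales of $tA$ and $(1-t)A$ are very disparate: the affine copy $A_x$ then occupies only a $(t/(1-t))^n$ fraction of $\co(A)$, so two disjoint copies fit into $K$ at negligible geometric cost, while the useful contribution of a boundary cap to $\delta(A;t)$ only scales linearly in $\tau$. Matching this tightly requires tuning the cap aspect ratios to the parameter $t$ and a delicate accounting to ensure that losses from thin extreme caps near $\partial K$ do not accumulate beyond a single factor of $\tau^{-1}$.
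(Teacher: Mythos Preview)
Your proposal is a sketch of intentions rather than a proof; the central steps are left as assertions without a mechanism. The most serious gap is the zoom-in contradiction: you claim that if $R=|\co(A)\setminus A|/\delta(A;t)$ is too large you can localize to a worst boundary region and produce $A'$ with $R'\ge R$ and strictly larger relative defect. But restricting $A$ to a region $U$ gives no control on $\co(A\cap U)$ (it need not be $U$, nor any prescribed convex body), and $\delta(A\cap U;t)=|D(A\cap U;t)\setminus(A\cap U)|$ bears no a priori relation to $\delta(A;t)$: the set $D(A\cap U;t)$ can be much smaller than $D(A;t)\cap D(U;t)$, so both numerator and denominator of $R'$ move in uncontrolled directions. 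Without a concrete localization lemma the bootstrap has no engine. Likewise the ``boundary covering by caps'' and the $\tau^{-1}$ accounting are described only aspirationally (``tuning cap aspect ratios'', ``delicate accounting''), with no inequality actually established.

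The paper's route avoids these difficulties by a different structure. It first triangulates $\co(A)$ into simplices $T_i$ coned off at a point $o$ chosen via \Cref{JohnClaim} so that $D(A\cup\{o\};t)=D(A;t)$; since the vertices of each $T_i$ then lie in $A$, this cleanly reduces to the case $\co(A)=T$ a simplex (\Cref{MAIN}). The fractal input is \emph{constructive}, not by contradiction: \Cref{reflem} proves directly that for every $T'\in\mathcal{T}_{i,k}(1-t)$ one has $|T'\cap A|\ge|T'|(1-\delta)-(i+2k)\,\delta(A;t)$, i.e.\ $A$ is quantitatively homogeneous in $T$. A covering of the annulus $T\setminus(1-\zeta)T$ by at most $(2n)^{5n}$ such simplices of total volume $\le\tfrac12|T|$ then yields $|T\setminus D(A;t)|\le\tfrac12|T\setminus A|+|\mathcal{A}|(i+2k)\,\delta(A;t)$, and the factor $\tau^{-1}$ falls out automatically because $i+2k\asymp n\log n/t$; there is no cap-by-cap balancing of the kind you envisage.
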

\begin{exmp}\label{sharpexmp}
To see that the exponents on $\delta$ and $\tau$ are sharp, suppose we have some inequality of the form $$|\co(A)\setminus A| \le C_n|A| \tau^{-\rho_1}(\delta(A;t)|A|^{-1})^{\rho_2}$$ whenever $\delta(A;t) \le \Delta_n(\tau)|A|$. Take $A=\{(0,0)\cup [\lambda,1+\lambda]\times [0,1]\}\times [0,1]^{n-2},$ with $\lambda \ll  \frac{\Delta_n(\tau)}{2\tau}$, and $t=\tau$. The inequality then becomes
$\frac{\lambda}{2}\le C_n \tau^{-\rho_1}(\tau\lambda(2-3\tau))^{\rho_2}.$
Because we can take $\lambda$ arbitrarily small, it follows that $\rho_2 \le 1$, so $\rho_2=1$ would be the optimal exponent. Given $\rho_2=1$, we then have $\rho_1 \ge 1$, so $\rho_1=1$ would be the optimal exponent.
\end{exmp}

\begin{rmk}
When $n=1$, Theorem 1.1 from \cite{Fig} (a corollary of Freiman's $3k-4$ theorem \cite{Freiman}) with $A$ replaced with $tA$ and $B$ replaced with $(1-t)A$ shows that the optimal exponents are actually $\tau^0\delta(A;t)^1$ in contrast to the case $n \ge 2$.
\end{rmk}

\begin{exmp}
Given exponents $\rho_1=\rho_2=1$, the constant $C_n$ grows at least exponentially as shown by the following example.
Let $R\geq 2$. Consider the set $A\subset \mathbb{R}^n$,
$A=[0,2]^{n-1}\times[-R,0]\cup\{(0,\dots,0,2)\}$.
Then $\co(A)=A\cup\bigcup_{x\in[0,2]} [0,2-x]^{n-1}\times\{x\}$ and $\frac{A+A}{2}=A\cup [0,1]^n$. Hence, $\delta(A,\frac12)=1$ and $|\co(A)\setminus A|=\int_{x\in[0,2]} (2-x)^{n-1}dx=\frac{2^n}{n}$. This example shows that $C_n\geq \frac{2^{n-1}}{n}$.
\end{exmp}

\subsection{Outline}
\label{Outline}By replacing $t$ with $1-t$ we may assume that $t \le \frac{1}{2}$.

\subsubsection{Initial Reduction} We first carry out a straightforward reduction along the lines of the reduction in \cite{Semisum} to \cite[Lemma 2.2]{Semisum}, reducing to the case that $\co(A)$ is a simplex $T$, so $A$ contains all of the vertices of $T$. In this reduction we use \Cref{BrunnMinkCor}, though we need only the following much weaker statement due to Christ \cite{Christ}: $|\co(A)\setminus A||A|^{-1}$ is bounded above by a (computable) function of the parameters $\delta(A;t)|A|^{-1}$ and $\tau$ which, for fixed $\tau$, tends to $0$ as $\delta(A;t)|A|^{-1}$ tends to $0$. 

\subsubsection{Fractal Structure} Next we show that if $\delta(A;t)|A|^{-1}$ is small, then  $A$ contains an approximate fractal structure. For each $i$ we recursively construct a nested sequence of families of simplices $\mathcal{T}_{i,0}\subset \mathcal{T}_{i,1}\subset \ldots$; each family $\mathcal{T}_{i,k}$ consists of translates of $(1-t)^iT$ contained inside $T$, and in the limit $\cup_k \mathcal{T}_{i,k}$ is dense among the translates of $(1-t)^iT$ contained inside $T$. 
We show that there exist universal constants $c_{i,k,n}=i+2k$ such that for translates $T'\in\mathcal{T}_{i,k}$,
$$|((1-t)^iA)_{T'}\cap A| \ge |((1-t)^iA)_{T'}|-c_{i,k,n}\delta(A;t),$$
where $((1-t)^iA)_{T'}$ is the translate of $(1-t)^iA$ induced by the translation that identifies $(1-t)^i T$ with $T'$. Though we need this fractal structure in order to prove this inequality recursively, we only use the corollary that $|T'\cap  A|\ge \frac{|T'|}{|T|}|A|-c_{i,k,n}\delta(A;t)$. This corollary quantitatively establishes that $A$ becomes more homogeneous in $T$ as $\delta(A;t)|A|^{-1} \to 0$.

\subsubsection{Covering a thickened $\partial T$ with small total volume}
Next, we consider a large homothetic scaled copy $R:=(1-\zeta)T$ inside $T$ for $\zeta \approx \frac{1}{n^4}$ and we produce a cover $\mathcal{A} \subset \mathcal{T}_{i,k}$ of $T \setminus R$ for $i\approx 5\log(n)/t$ and $k\approx n \log(n)/t$. The cover $\mathcal{A}$ consists of translates of $(1-t)^i T \approx \frac{1}{n^5}T$ and has the property that the size of $\mathcal{A}$ is at most $(2n)^{5n}$ and the total volume of the simplicies in $\mathcal{A}$ is less than $\frac{1}{2}|T|$. We note that $|\mathcal{A}|,i,k$ affect the complexity of $C_n$, whereas $\zeta$ affects only the complexity of $\Delta_n(\tau)$ and not $C_n$.

In order to produce the covering $\mathcal{A}$ above we proceed in two steps. First, we use a covering result of Rogers \cite{Rogers} to produce an efficient covering $\mathcal{B}$ of $T\setminus R$ with translates of $n^{-\frac{1}{n}}(1-t)^iT$ contained inside $T$. The covering $\mathcal{B}$ has the property that the size of $\mathcal{B}$ is at most $(2n)^{5n}$ and the total volume of the simplicies in $\mathcal{B}$ is less than $\frac{1}{2n}|T|$. Second, we show that for each translate $T'$ of $n^{-\frac{1}{n}}(1-t)^iT$ contained inside $T$, there exists a simplex $T'' \in \mathcal{T}_{i,k}$ such that $T' \subset T''$. This naturally gives the desired cover $\mathcal{A}$.




\subsubsection{Putting it all together}

We may assume that $R \subset D(A;t)$ since a straightforward argument shows this holds whenever $|T\setminus A||A|^{-1}$ is sufficiently small, and $|T\setminus A||A|^{-1}\to 0$ as $\delta(A;t)|A|^{-1} \to 0$ by \Cref{BrunnMinkCor}. Rephrasing the homogeneity statement for $A$, for each $T' \in \mathcal{T}_{i,k}$ we have $$|T'\setminus A| \le \frac{|T\setminus A|}{|T|}|T'|+c_{i,k,n}\delta(A;t).$$
Because $\mathcal{A}$ covers $T\setminus R$ and $A \subset D(A;t)$, we have $|T\setminus D(A;t)|\le \sum_{T' \in \mathcal{A}}|T'\setminus A|$, and by construction $\sum_{T' \in \mathcal{A}}|T'|\le \frac{1}{2}|T|$. Combining these facts, we immediately deduce
$$|T\setminus D(A;t)| \le \frac{1}{2}|T\setminus A|+|\mathcal{A}|c_{i,k,n}\delta(A;t),$$
i.e.
$$|T\setminus A| \le 2(1+|\mathcal{A}|c_{i,k,n})\delta(A;t).$$
Because $|\mathcal{A}| \le (2n)^{5n}$ and $c_{i,k,n} \approx \frac{n\log(n)}{t}$, we see that with $C_n= (4n)^{5n}$ we have
$$|T\setminus A| \le C_n \tau^{-1}\delta(A;t).$$

\section{Initial Reduction}

In this section, we will reduce \Cref{mainthm} to \Cref{MAIN}, similar to the initial reduction in \cite{Semisum} to \cite[Lemma 2.2]{Semisum}.

\begin{thm}
\label{MAIN}
For all $n \ge 2$ there are (computable) constants $C_n>0$ (we can take $C_n=(4n)^{5n}$) and constants $0<\delta_n(\tau)<1$ for each $\tau \in (0,\frac{1}{2}]$ such that the following is true. Let $\tau \in (0,\frac{1}{2}]$, $t \in [\tau,1-\tau]$, and suppose $T\subset \mathbb{R}^n$ is a simplex with $|T|=1$, $A\subset T$ a measurable subset containing all vertices of $T$, and $|A|=1-\delta$ with $0<\delta\le \delta_n(\tau)$. Then
$$|T\setminus A| \le C_n\tau^{-1}\delta(A;t).$$
\end{thm}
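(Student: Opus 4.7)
After the initial reduction we have $\co(A)=T$ a simplex of volume $1$, $A\subset T$ containing all vertices, and $|T\setminus A|=\delta$ small. Since $A\subset D(A;t)\subset T$, we have the identity $|T\setminus A|=|T\setminus D(A;t)|+\delta(A;t)$, so it suffices to bound $|T\setminus D(A;t)|$. I would first show that for $\delta$ sufficiently small, $R:=(1-\zeta)T\subset D(A;t)$ for a choice $\zeta \asymp n^{-4}$: any interior point of $R$ is sufficiently in the interior of $T$ that the natural "fiber" of points $(x,y)\in T\times T$ with $tx+(1-t)y$ equal to that point has large measure, so its intersection with $A\times A$ is nonempty (here we use \Cref{BrunnMinkCor} to pass from the assumption $\delta(A;t)/|A|\to 0$ to $|T\setminus A|/|A|\to 0$). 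Consequently $T\setminus D(A;t)$ is contained in the thin shell $T\setminus R$, and the problem becomes one of efficiently covering this boundary shell.

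\textbf{Fractal structure and homogeneity.} The main technical step is to construct the families $\mathcal{T}_{i,k}$. The seed $\mathcal{T}_{i,0}$ would be built by iterating $i$ times the contractions $\phi_v(x)=(1-t)x+tv$ across vertices $v$ of $T$ (and recursively of the previously constructed simplices). For $T'\in\mathcal{T}_{i,0}$ the induced translate $((1-t)^iA)_{T'}$ sits inside $T'$, and I would prove inductively that it misses $A$ by at most $i\cdot\delta(A;t)$: each contraction costs one $\delta$, because $D(A;t)\supset tA+(1-t)v\supset (\text{induced translate})$. For $\mathcal{T}_{i,k+1}$ I would enlarge by showing any translate of $(1-t)^iT$ inside $T$ at distance close enough to an existing $T'\in\mathcal{T}_{i,k}$ can be obtained as a $t$-interpolated sum of two translates already in $\mathcal{T}_{i,k}$ (with a small extra loss of $2\delta(A;t)$), giving $c_{i,k,n}=i+2k$. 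In the limit $\bigcup_k\mathcal{T}_{i,k}$ is dense in all translates of $(1-t)^iT$ inside $T$. The corollary used below is
\[
|T'\cap A|\ge \frac{|T'|}{|T|}|A|-c_{i,k,n}\delta(A;t),
\]
which follows since $((1-t)^iA)_{T'}$ occupies relative volume $|A|/|T|$ inside $T'$.

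\textbf{Covering and conclusion.} To cover $T\setminus R$ I would apply Rogers' covering theorem to produce a family $\mathcal{B}$ of at most $(2n)^{5n}$ translates of $n^{-1/n}(1-t)^iT$ (with $i\asymp \log(n)/t$) contained in $T$, whose total volume is at most $\tfrac{1}{2n}|T|$. Because each such translate is smaller by a factor $n^{-1/n}$ than a translate of $(1-t)^iT$, and $\bigcup_k\mathcal{T}_{i,k}$ is dense in the latter, for $k\asymp n\log(n)/t$ every Rogers translate fits inside some $T''\in\mathcal{T}_{i,k}$; the resulting family $\mathcal{A}$ has $|\mathcal{A}|\le (2n)^{5n}$ and $\sum_{T'\in\mathcal{A}}|T'|\le \tfrac{1}{2}|T|$. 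Since $T\setminus D(A;t)\subset T\setminus R\subset \bigcup_{T'\in\mathcal{A}}(T'\setminus A)$, summing the homogeneity corollary yields
\[
|T\setminus D(A;t)|\le \sum_{T'\in\mathcal{A}}|T'\setminus A|\le \frac{|T\setminus A|}{|T|}\cdot\tfrac{1}{2}|T|+|\mathcal{A}|c_{i,k,n}\delta(A;t).
\]
Adding $\delta(A;t)$ gives $|T\setminus A|\le \tfrac{1}{2}|T\setminus A|+(1+|\mathcal{A}|c_{i,k,n})\delta(A;t)$, and rearranging produces the stated bound with $C_n\le (4n)^{5n}$ after absorbing the $\tau^{-1}$ coming from $c_{i,k,n}\asymp n\log(n)/t$.

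\textbf{Main obstacle.} The delicate point is the fractal construction: one must simultaneously manage the recursive scaling by $(1-t)^i$ (where the $i$ factor reflects a linearly growing error) and the "filling-in" enlargement step that converts a sparse set of translates into a dense one while keeping the error additive and linear in $k$. The clean bound $c_{i,k,n}=i+2k$ is essential; any super-linear dependence would blow up the final constant. A secondary subtlety is arranging the scales so that the Rogers cover, the shell thickness $\zeta$, and the density of $\mathcal{T}_{i,k}$ are all mutually compatible — in particular the $n^{-1/n}$ shrink in Rogers must be compensated by taking $k$ large enough without $c_{i,k,n}$ exceeding $O(n\log n/\tau)$.
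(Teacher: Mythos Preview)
Your proposal is correct and follows essentially the same approach as the paper: the fractal families $\mathcal{T}_{i,k}$ with error constant $c_{i,k,n}=i+2k$ (the paper's \Cref{reflem}), the Rogers covering of the boundary shell $T\setminus R$, the quantitative density of $\mathcal{T}_{i,k}$ allowing each Rogers translate to be absorbed into an element of $\mathcal{T}_{i,k}$ (the paper's \Cref{usefullem}), and the final summation are all as in the paper, including the parameter choices $i\asymp\log(n)/t$, $k\asymp n\log(n)/t$, $|\mathcal{A}|\le(2n)^{5n}$. The only minor deviations are cosmetic (e.g.\ your fiber argument for $R\subset D(A;t)$ versus the paper's homothety argument in \Cref{JohnClaim}, and a small slip writing $tA+(1-t)v$ where $\phi_v(A)=(1-t)A+tv$ is meant).
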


 We first need the following geometric lemma.

\begin{lem}
\label{JohnClaim}
For every convex polytope $P$, there exists a point $o \in P$ (which we set to be the origin) such that the following is true. For any constant $b_n(\tau)\in (0,1)$, there exists a  constant $\epsilon_n(\tau)$ such that for any $A \subset P$,  if $t \in [\tau,1-\tau]$ and $|P\setminus A| \le \epsilon_n(\tau)|P|$, then $(1-b_n(\tau))P\subset D(A;t)$.
\end{lem}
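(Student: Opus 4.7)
The plan is to work pointwise: for each $y \in (1-b_n(\tau))P$, I will produce $a_1, a_2 \in A$ with $y = t a_1 + (1-t) a_2$. Pairs $(a_1,a_2) \in P \times P$ yielding $y$ are parametrized by the single variable $w := a_1 - y$, which forces $a_2 = y - \tfrac{t}{1-t}w$, and the constraint $(a_1, a_2) \in P \times P$ reduces to $w \in W_y := (P - y) \cap \tfrac{1-t}{t}(y - P)$. The affine map $w \mapsto a_2$ has Jacobian $(t/(1-t))^n$, so the measure of "spoiled" $w \in W_y$ (those with $a_1 \notin A$ or $a_2 \notin A$) is at most $\bigl(1 + (\tfrac{1-t}{t})^n\bigr)|P \setminus A| \le 2\tau^{-n}\epsilon_n(\tau)|P|$. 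The entire lemma therefore reduces to obtaining a uniform lower bound $|W_y| \ge c(b_n(\tau),\tau,n)|P|$ and then taking $\epsilon_n(\tau)$ much smaller than $c(b_n(\tau),\tau,n)\tau^n$.

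I expect the uniform lower bound on $|W_y|$ to be the main obstacle: for a generic polytope $P$, $W_y$ can collapse to nearly a single point if $o$ is poorly chosen (for instance, close to a vertex of $P$). My choice of $o$ is the center of the John ellipsoid of $P$; after translating so $o = 0$, John's theorem yields $-P \subset nP$, equivalently $\tfrac{1}{n}P \subset -P$. This is a property of $P$ alone, as the lemma requires.

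Now fix $y = (1-b_n(\tau))z$ with $z \in P$. Convexity plus the depth of $y$ in $P$ yields $P - y \supset b_n(\tau)P$ and $y - P \supset -b_n(\tau)P$: for any $u \in P$, the point $y + b_n(\tau) u = (1-b_n(\tau))z + b_n(\tau)u$ is a convex combination of $z, u \in P$. Combining with $\tfrac{1}{n}P \subset -P$ and $t \in [\tau, 1-\tau]$ with $\tau \le \tfrac{1}{2}$ produces $W_y \supset \tfrac{b_n(\tau)\tau}{n}P$, and hence $|W_y| \ge \bigl(\tfrac{b_n(\tau)\tau}{n}\bigr)^n|P|$. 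Choosing $\epsilon_n(\tau)$ to be a small multiple of $b_n(\tau)^n \tau^{2n} n^{-n}$ makes the spoiled measure strictly less than $|W_y|$, producing the required $a_1, a_2 \in A$ and so $y \in D(A;t)$. Since $y$ was arbitrary, $(1-b_n(\tau))P \subset D(A;t)$.
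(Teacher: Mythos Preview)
Your argument is correct. Both proofs choose $o$ to be the John center and finish with a volume--counting argument, but the executions differ. The paper first reduces the question at a general point $x\in(1-b_n(\tau))P$ to the question at the fixed center $o$ via a homothety based at a boundary point, and then works entirely inside the inscribed John ball, exploiting its central symmetry through the negative homothety $y\mapsto -\tfrac{t}{1-t}y$. You instead stay at $y$ and parametrize candidate pairs $(a_1,a_2)$ linearly by $w=a_1-y$; the role of the ball's symmetry is replaced by the algebraic consequence $\tfrac{1}{n}P\subset -P$ of John's theorem, which together with convexity gives the uniform lower bound $W_y\supset \tfrac{b_n(\tau)\tau}{n}P$. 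Your route is a bit more direct (no homothety reduction, no auxiliary set $C$), and it yields an explicit $\epsilon_n(\tau)$ of the same order $b_n(\tau)^n\tau^{2n}n^{-n}$ as the paper's bound. The paper's approach has the minor advantage that the final volume comparison happens inside a ball, making the symmetry visually transparent; yours trades that picture for a cleaner linear parametrization.
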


\begin{proof}
We may assume that $t \le \frac{1}{2}$ as the statement is invariant under replacing $t$ with $1-t$. Without loss of generality we may assume that $|P|=1$. By a lemma of John \cite{John}, after a volume-preserving affine transformation, there exists a ball $B\subset P$ of radius $n^{-1}$. Denote $o$ for the center of $B$, and set $o$ to be the origin.

We will show that $(1-b_n(\tau))P \subset D(A;t)$. Take $x \in (1-b_n(\tau))P$, and let $y$ be the intersection of the ray $ox$ with $\partial P$. Note that the ratio $r=|xy|/|oy| \ge b_n(\tau)$.



Let $H$ be the homothety with center $y$ and ratio $r$. This homothety sends $o$ to $x$ and $P$ to $H(P)$.
Note that $H(P) \subset P$ because $P$ is convex. Denoting
$$A'=A\cap H(P),$$ we have
$$|A'|\ge r^n-\epsilon_n(\tau).$$
The statement
$x \in D(A';t)$ is implied by the statement that
$o \in D(C;t)$ for $C=H^{-1}(A')\subset P$, which we will now show (in fact we will show $o\in D(C\cap B;t)$). 

Note that  $|C|\ge 1-r^{-n}\epsilon_n(\tau)$, so $|B\setminus C| \le r^{-n}\epsilon_n(\tau)$. Consider the negative homothety $H'$ scaling by a factor of $-\frac{t}{1-t}\in [-1,0)$ about $o$. If $o \not \in D(C \cap B;t)$, then at least one of $y$ and $H'(y)$ is not in $C \cap B$ for every $y \in B$. A simple volume argument shows that this would imply $|B\setminus C| \ge \frac{1}{2}|H'(B)|$, and as $B$ contains a cube of side length $2/\sqrt{n}$ we would have $$r^{-n}\epsilon_n(\tau) \ge |B\setminus C| \ge \frac{1}{2}|H'(B)|=\frac{1}{2}\left(\frac{t}{1-t}\right)^n|B|\ge \frac{1}{2} \left(\frac{\tau}{1-\tau}\right)^n\left(\frac{2}{\sqrt{n}}\right)^n.$$ Therefore as $b_n(\tau)^{-n} \ge r^{-n}$, taking $$\epsilon_n(\tau)<b_n(\tau)^n\frac{1}{2}\left(\frac{\tau}{1-\tau}\right)^n\left(\frac{2}{\sqrt{n}}\right)^n,$$
we deduce that $o \in D(C \cap B;t)$ and therefore in particular $x \in D(A';t)$.

\end{proof}

\begin{obs}
If $P$ is a (regular) simplex $T$, we can take $o$ to be the barycenter of $T$.
\end{obs}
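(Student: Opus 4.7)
The plan is to reduce to the case of a regular simplex via a volume-preserving affine transformation, and then use the symmetry of the regular simplex to identify the barycenter as the natural choice of $o$, playing the role of John's ball center in the proof of \Cref{JohnClaim}.

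First I would check that the hypothesis and conclusion of \Cref{JohnClaim} are invariant under volume-preserving affine transformations, since such maps preserve Lebesgue measure, the operation $A \mapsto D(A;t)$, convex combinations, ratios along rays, and barycenters. So given any simplex $T$, I would apply a volume-preserving affine transformation carrying $T$ to a regular simplex $T'$ of the same volume; the barycenter of $T$ maps to the barycenter of $T'$. Hence it suffices to establish the observation when $T$ is a regular simplex.

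For a regular $n$-simplex $T$, the symmetric group $S_{n+1}$ acts on $T$ by permuting its vertices, and its unique fixed point in $T$ is the barycenter. The inscribed ball of $T$ (the largest ball contained in $T$) is uniquely determined by $T$, hence $S_{n+1}$-invariant, and therefore centered at the barycenter. A direct computation gives the inradius of the unit-volume regular $n$-simplex as a dimensional constant times $|T|^{1/n}$, comparable to and in fact no smaller than the $n^{-1}|T|^{1/n}$ provided by John's lemma in the proof of \Cref{JohnClaim}. Taking $o$ to be the barycenter and using the inscribed ball in place of $B$, the homothety from $y$ into $T$ and the negative-homothety volume estimate showing $o \in D(C\cap B;t)$ go through verbatim, with any new dimensional constants absorbed into $\epsilon_n(\tau)$. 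The only real check here is the inradius estimate, which is a routine computation and does not affect the structure of the argument.
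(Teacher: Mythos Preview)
Your proposal is correct. The paper offers no separate proof of this observation; it is recorded as immediate from the proof of \Cref{JohnClaim}, since for a regular simplex the John ball (indeed the inscribed ball) is centered at the barycenter by symmetry, and affine invariance of barycenters carries this over to an arbitrary simplex. Your write-up simply spells out this implicit reasoning, so there is nothing to compare beyond noting that you have supplied the obvious details the paper omits.
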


\begin{proof}[Proof that \Cref{MAIN} implies \Cref{mainthm}]
We may assume that $t \le \frac{1}{2}$ since \Cref{mainthm} is invariant under replacing $t$ with $1-t$. By approximation, we can assume that $A$ has polyhedral convex hull $\co(A)$ with the vertices of $\co(A)$ lying in $A$ (see e.g. \cite[p.3 footnote 2]{Semisum}). 

Take $b_n(\tau)$ to be the minimum of $\tau$ and the constant such that $$\delta_n(\tau)^{-1}(1-(1-b_n(\tau))^n) = 1-C_n^{-1}\tau,$$ and take $\epsilon_n(\tau)$ as in \Cref{JohnClaim}.

From \Cref{BrunnMinkCor}, we see that we can choose $\Delta_n(\tau)$ sufficiently small so that $$|\co(A)\setminus A| \le \epsilon_n(\tau)|A|\le \epsilon_n(\tau)|\co(A)|,$$ and therefore by \Cref{JohnClaim} there is a translate of $(1-b_n(\tau))\co(A)\subset D(A;t)$. Let $o$ be the center of homothety relating this translate of $(1-b_n(\tau))\co(A)$ and $\co(A)$. Because $b_n(\tau)\le \tau$, the region $to+(1-t)\co(A)$ is contained in $D(A;t)$, so from this we deduce that $D(A \cup \{o\};t)=D(A;t)$. Therefore we may assume without loss of generality that $o \in A$.

Note that the inequality in \Cref{mainthm} that we want to deduce is equivalent to
\begin{align*}|\co(A)\setminus D(A;t)| \le& (1-C_n^{-1}\tau)|\co(A)\setminus A|.\end{align*} 

Triangulate $\co(A)$ into simplices $T_i$ by triangulating $\partial \co(A)$ and coning off each facet at $o$. Then in each simplex $T_i$, we claim that $$|T_i\setminus D(A;t)|\le(1-C_n^{-1}\tau)|T_i\setminus A|.$$

Provided $|T_i\setminus A| \le \delta_n(\tau)|T_i|$, applying \Cref{MAIN} to $T_i,A\cap T_i$ yields the stronger inequality
$$|T_i\setminus D(A\cap T_i;t)|\le (1-C_n^{-1}\tau)|T_i\setminus A|.$$
On the other hand, if $|T_i\setminus A| \ge \delta_n(\tau)|T_i|$, then as $b_n(\tau)o+(1-b_n(\tau))T_i \subset D(A;t)\cap T_i$, we have
$$|T_i \setminus D(A;t)|\le |T_i|(1-(1-b_n(\tau))^n) \le \delta_n(\tau)^{-1}(1-(1-b_n(\tau))^n)|T_i\setminus A| \le (1-C_n^{-1}\tau)|T_i\setminus A|.$$
We conclude by noting
$$|\co(A)\setminus D(A;t)| = \sum |T_i \setminus D(A;t)|\le \sum (1-C_n\tau^{-1})|T_i\setminus A|=(1-C_n^{-1}\tau)|\co(A)\setminus A|.$$
\end{proof}

\section{Setup and technical lemmas}

We take $A$ to satisfy the hypotheses of \Cref{MAIN}. We may assume that $t \le \frac{1}{2}$ since \Cref{MAIN} is invariant under replacing $t$ with $1-t$. It suffices to prove the statement for a particular choice of $T$ since all simplices of volume $1$ in $\mathbb{R}^n$ are equivalent under volume-preserving affine transformations. Hence we work in a fixed regular simplex $T\subset \mathbb{R}^n$ from now on.
Let $x_0,\ldots,x_n$ denote the vertices of $T$, and define the \emph{corner $\lambda^{i}$-scaled simplices} to be
$$S_i^j(\lambda)=(1-\lambda^i)x_j+ \lambda^iT \text{ for } 0 \le j \le n$$ and set $$\mathcal{S}_i(\lambda):=\{S_i^0(\lambda),\ldots,S_i^n(\lambda)\}.$$
In the picture below, we've shaded one of the $S^j_2(\frac{1}{2})$'s inside $T$ when $n=2$.
\begin{center}
\resizebox{5.77cm}{5.0cm}{
\begin{tikzpicture}
\draw (0,0) -- (4,4) -- (8,0) -- (0,0);
\draw (2,2)--(6,2)--(4,0)--(2,2);

\draw (3,3) -- (5,3);
\draw (3.5,3.5) -- (4.5,3.5);
\draw(3.75,3.75)--(4.25,3.75);

\draw (1,1)--(2,0);
\draw (0.5,0.5)--(1,0);
\draw (0.25,0.25)--(0.5,0);

\draw (7,1)--(6,0);
\draw (7.5,0.5)--(7,0);
\draw (7.75,0.25)--(7.5,0);

\draw[fill=black, opacity=0.2] (4,4)--(3,3)--(5,3)--cycle;
\end{tikzpicture}
}
\end{center}

Define the \emph{$\lambda^i$-scaled $k$-averaged simplices} $\mathcal{T}_{i,k}(\lambda)$ iteratively by
\begin{align*}
\mathcal{T}_{i,0}(\lambda)&=\mathcal{S}_i(\lambda)\\
\mathcal{T}_{i,k+1}(\lambda)&=\{\lambda B_1+(1-\lambda)B_2\mid B_1,B_2 \in \mathcal{T}_{i,k}(\lambda)\}.\end{align*}
Note that all simplices in $\mathcal{T}_{i,k}(\lambda)$ are translates of $\lambda^iT$, and we have the inclusions
$$\mathcal{T}_{i,0}(\lambda)\subset \mathcal{T}_{i,1}(\lambda)\subset  \mathcal{T}_{i,2}(\lambda) \subset \ldots$$
For fixed $i,\lambda$, the simplices in the family $\mathcal{T}_{i,k}(\lambda)$ eventually cover all of $T$ and heavily overlap each other as $k \to \infty$ (in fact the translates become dense among all possible translates of $\lambda^iT$ which lie inside $T$). Shaded below are the simplices in $\mathcal{T}_{2,1}(\frac{1}{2})$ when $n=2$.
\begin{center}
\resizebox{5.77cm}{5.0cm}{
\begin{tikzpicture}
\draw (0,0) -- (4,4) -- (8,0) -- (0,0);
\draw[fill=black, opacity=0.2] (4,4)--(3,3)--(5,3)--cycle;
\draw[fill=black, opacity=0.2] (0,0)--(1,1)--(2,0)--cycle;
\draw[fill=black, opacity=0.2] (8,0)--(7,1)--(6,0)--cycle;

\draw[fill=black, opacity=0.2] (3,0)--(4,1)--(5,0)--cycle;
\draw[fill=black, opacity=0.2] (1.5,1.5)--(2.5,2.5)--(3.5,1.5)--cycle;
\draw[fill=black, opacity=0.2] (4.5,1.5)--(5.5,2.5)--(6.5,1.5)--cycle;
\end{tikzpicture}
}
\end{center}

\Cref{reflem} is the crux of our argument. The proof of \Cref{reflem} shows that  for all $T' \in \mathcal{T}_{i,k}(1-t)$, the set $|T'\cap A|$ contains a translated copy of $(1-t)^iA$ (up to a bounded error). This fractal structure allows us to conclude that $|T'\cap A|$ is bounded below by $|T'|(1-\delta)$ (up to a bounded error).

\begin{lem}
\label{reflem}The constants $c_{i,k,n}=i+2k$ are such that for every $T' \in \mathcal{T}_{i,k}(1-t)$ we have
$$|T'\cap A| \ge |T'|(1-\delta)-c_{i,k,n}\delta(A;t).$$
\end{lem}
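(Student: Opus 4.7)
The plan is to establish the bound by double induction, first on $i$ to handle the case $k=0$, and then on $k$ for each fixed $i$. It is convenient to work with the \emph{canonical translate} $\hat A_{T'} := v + (1-t)^i A \subset T'$ whenever $T' = v + (1-t)^i T$ is a translate of $(1-t)^i T$; then $|\hat A_{T'}| = |T'|(1-\delta)$ and the inequality to prove is equivalent to $|\hat A_{T'} \setminus A| \le c_{i,k,n}\delta(A;t)$, because $|T' \setminus A| \le |T' \setminus \hat A_{T'}| + |\hat A_{T'} \setminus A| = |T'|\delta + |\hat A_{T'} \setminus A|$. This reformulation makes the ``fractal'' picture explicit: we track how much of the translated shrunken copy of $A$ inside $T'$ escapes $A$.

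For the base case $k=0$, I would induct on $i$. The case $i=0$ is trivial since $S_0^j(1-t)=T$ and $|\hat A_T \setminus A|=|A \setminus A|=0$. For $i \ge 1$, I would use the identity $S_i^j(1-t) = t x_j + (1-t) S_{i-1}^j(1-t)$, which gives $\hat A_{S_i^j(1-t)} = t x_j + (1-t) \hat A_{S_{i-1}^j(1-t)}$. Partitioning $\hat A_{S_{i-1}^j(1-t)}$ as $(\hat A_{S_{i-1}^j(1-t)} \cap A) \sqcup (\hat A_{S_{i-1}^j(1-t)} \setminus A)$ decomposes $\hat A_{S_i^j(1-t)}$ into two pieces: the first lies in $tA + (1-t)A = D(A;t)$ because $x_j \in A$, contributing at most $|D(A;t)\setminus A| = \delta(A;t)$ to the escape from $A$; the second has measure at most $(1-t)^n(i-1)\delta(A;t)$ by induction. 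Summing yields $|\hat A_{S_i^j(1-t)} \setminus A| \le i\delta(A;t) = c_{i,0,n}\delta(A;t)$.

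For the inductive step $k \to k+1$, take $T' = (1-t) B_1 + t B_2$ with $B_1, B_2 \in \mathcal{T}_{i,k}(1-t)$. The key is to consider
\[ W := (1-t)(B_1 \cap A) + t(B_2 \cap A) \subset T' \cap D(A;t), \]
which gives $|T' \cap A| \ge |W \cap A| \ge |W| - \delta(A;t)$. The crux is to apply Brunn-Minkowski to $W$:
\[ |W|^{1/n} \ge (1-t)|B_1 \cap A|^{1/n} + t|B_2 \cap A|^{1/n} \ge \bigl(|T'|(1-\delta) - c_{i,k,n}\delta(A;t)\bigr)^{1/n}, \]
where the second inequality uses the inductive hypothesis and $|B_1|=|B_2|=|T'|$. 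Hence $|W| \ge |T'|(1-\delta) - c_{i,k,n}\delta(A;t)$, and so $|T' \cap A| \ge |T'|(1-\delta) - (c_{i,k,n}+1)\delta(A;t) \ge |T'|(1-\delta) - c_{i,k+1,n}\delta(A;t)$ since $c_{i,k+1,n} = c_{i,k,n}+2$.

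The main subtlety is ensuring that the error does not compound multiplicatively across the $k$-induction: a naive union-bound combination of the two $c_{i,k,n}\delta(A;t)$ errors from $B_1\cap A$ and $B_2\cap A$ would double at each step and explode exponentially. The Brunn-Minkowski trick, together with concavity of $z \mapsto z^{1/n}$, absorbs both errors into a single $(V-\epsilon)^{1/n}$ term and keeps the growth additive. One should also verify non-vacuity: if $c_{i,k,n}\delta(A;t) > |T'|(1-\delta)$ then the right-hand side is negative and the inequality is automatic, which is covered by choosing $\delta_n(\tau)$ in \Cref{MAIN} small enough.
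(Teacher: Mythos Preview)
Your proof is correct and follows the same double-induction scheme as the paper: establish the case $k=0$ by induction on $i$ via the nesting $S_i^j=t x_j+(1-t)S_{i-1}^j$, and then induct on $k$ using Brunn--Minkowski on $(1-t)(B_1\cap A)+t(B_2\cap A)$.

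There is one modest but genuine difference worth noting. The paper carries the \emph{stronger} inequality $|\hat A_{T'}\setminus A|\le c_{i,k,n}\delta(A;t)$ through the entire $k$-induction (this is the ``fractal structure'' the paper highlights), and in the passage from $k$ to $k+1$ picks up two error terms of $\delta(A;t)$: one from $|\lambda^i D(A;t)\setminus\lambda^i A|$ and one from $|D(A;t)\setminus A|$. You instead induct directly on the lemma's weaker inequality $|T'\cap A|\ge |T'|(1-\delta)-c_{i,k,n}\delta(A;t)$; since your set $W$ already lands inside $T'\cap D(A;t)$, you only pay one $\delta(A;t)$ per step and obtain the increment $c_{i,k,n}+1$ rather than $c_{i,k,n}+2$. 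Both close the induction with the stated $c_{i,k,n}=i+2k$, but your route is slightly cleaner for the lemma as stated, while the paper's route records the extra information that a translated copy of $(1-t)^iA$ sits inside $A\cap T'$ up to bounded error.

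One small slip: the stronger inequality $|\hat A_{T'}\setminus A|\le c_{i,k,n}\delta(A;t)$ is only \emph{sufficient} for, not equivalent to, the lemma's inequality (your ``because'' clause gives just one direction). This is harmless, since your $k$-step uses only the weaker form as inductive hypothesis and conclusion.
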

\begin{proof}
For the remainder of this proof, we will denote
$$\lambda=1-t,$$ and write for notational convenience $S_i^j$ instead of $S_i^j(\lambda)$. The following notation will be useful for us: consider the translation that brings $\lambda^iT$ to $T'$ and denote by $(\lambda^iA)_{T'}$ the shift of the set $\lambda^iA$ under this translation.



We shall actually show the stronger inequalities
$$|(\lambda^iA)_{T'} \setminus A |\le c_{i,k,n}\delta(A;t)$$
(which are stronger as $|(\lambda^i A)_{T'}|=|T'|(1-\delta)$).


First, we show the inequality when $k=0$.  Recall that if $T' \in \mathcal{T}_{i,0}(\lambda)$ then $T'=S^j_i$ for some $j$. The inequality is trivial for $(i,k)=(0,0)$ by definition of $\delta$.

We now show the inequality for $(i,k)=(1,0)$. Note $(\lambda A)_{S^j_1}=(1-\lambda) x_j+\lambda A\subset D(A;t)$, so
$$|(\lambda A)_{S^j_1}\setminus A| \le |D(A;t)\setminus A|=\delta(A;t).$$

Suppose we know the result for $(i,0)$, we now prove the result for $(i+1,0)$. Then $(\lambda^{i+1}A)_{S^j_{i+1}}=(1-\lambda^{i+1})x_j+\lambda^{i+1}A$, and we have 
\begin{align*}|(\lambda^{i+1}A)_{S^j_{i+1}}\setminus A|&\le |(\lambda^{i+1}A)_{S^j_{i+1}}\setminus (\lambda A)_{S^j_1}|+|(\lambda A)_{S^j_1}\setminus A|\\
&=\lambda^n|(\lambda^i A)_{S^j_i}\setminus A|+|(\lambda A)_{S_1^j}\setminus A|\\
&\le (\lambda^n c_{i,0,n}+c_{1,0,n})\delta(A;t)
\\
&\le c_{i+1,0,n}\delta(A;t).\end{align*}

Finally, we induct on $k$. We have proved the base case $k=0$, so  assume the inequality for $(i,k)$. We will now prove the inequality for $(i,k+1)$.

Thus we suppose that $T' \in \mathcal{T}_{i,k+1}$, which by definition means that there exists $T'_1,T'_2 \in \mathcal{T}_{i,k}$ such that
$$T'=\lambda T'_1+(1-\lambda) T'_2.$$
We now prove an easy claim before returning to the proof of the lemma.
\begin{claim}
Let $X,X'$ be translates of each other in $\mathbb{R}^n$ with common volume $V=|X|=|X'|$, and let $Y \subset X$, $Y' \subset X'$. Then if $V'$ is a constant such that $|X \setminus Y|,|X'\setminus Y'|\le V'$, we have
$$|\lambda Y+(1-\lambda)Y'| \ge V-V'.$$
\end{claim}
\begin{proof}
We have $|Y|,|Y'| \ge V-V'$, so the result follows from the Brunn-Minkowski inequality.
\end{proof}
Returning to the proof of the lemma, we have by the induction hypothesis that both
\begin{align*}
&| (\lambda^i A)_{T_1'}\setminus A| \le c_{i,k,n}\delta(A;t),\text{ and}\\
&|(\lambda^i A)_{T_2'}\setminus A| \le c_{i,k,n}\delta(A;t).
\end{align*}
Because $(\lambda^i A)_{T_1'}$ and  $(\lambda^i A)_{T_2'}$ are translates of each other with common volume $(1-\delta)|T'|$, setting $X=(\lambda^i A)_{T_1'}$, $X'=(\lambda^i A)_{T_2'}$, $Y=A\cap (\lambda^i A)_{T_1'}$, $Y'=A\cap (\lambda^i A)_{T_2'}$ we deduce from the claim that
$$|\lambda(A \cap (\lambda^i A)_{T_1'})+(1-\lambda)(A \cap (\lambda^i A)_{T_2'})| \ge |T'|(1-\delta) - c_{i,k,n}\delta(A;t).$$
Because $D(A;t)=\lambda A+(1-\lambda)A$ and $(\lambda^i D(A;t))_{T'}=\lambda(\lambda^i A)_{T_1'}+(1-\lambda)(\lambda^i A)_{T_2'}$, we have
\begin{align*}
|D(A;t)\cap (\lambda^i A)_{T'}|&\ge|D(A;t) \cap  (\lambda^i D(A;t))_{T'}|-|\lambda^iD(A;t)\setminus \lambda^iA|\\
&\ge|D(A;t) \cap  (\lambda^i D(A;t))_{T'}|-\delta(A;t)\\
 &\ge |\lambda(A \cap (\lambda^i A)_{T_1'})+(1-\lambda)(A \cap (\lambda^i A)_{T_2'})|-\delta(A;t)
\\&\ge  |T'|(1-\delta) - (c_{i,k,n}+1)\delta(A;t),
\end{align*}
which as $|(\lambda^i A)_{T'}|=(1-\delta)|T'|$ is equivalent to
$$|(\lambda^i A)_{T'}\setminus D(A;t)| \le (c_{i,k,n}+1)\delta(A;t).$$
We conclude that 
\begin{align*}|(\lambda^i A)_{T'}\setminus A| &\le |(\lambda^i A)_{T'}\setminus D(A;t)| + \delta(A;t) \\&\le (c_{i,k,n}+2)\delta(A;t)
\\&= c_{i,k+1,n}\delta(A;t).\end{align*}
\end{proof}


The following lemma shows that given $\alpha<1$ and $\frac{1}{2}\le \lambda<1$, any arbitrary covering of $T$ by translates of $\alpha^n\lambda T$ contained inside $T$ can be approximated by a covering consisting of elements of $\mathcal{T}_{i,k}(\lambda)$ for fixed small values $i,k$. The parameters $i,k$ are positively correlated with $\lambda, \alpha$.

Before we proceed, we need the following notation. Let
$\mathcal{T}_k(\lambda;\lambda';T)$ be recursively defined by setting
\begin{align*}
    \mathcal{T}_0(\lambda;\lambda';T)&=\{\lambda' T+(1-\lambda')x_j \mid j \in \{0,\ldots,n\}\}\\
    \mathcal{T}_k(\lambda;\lambda';T)&=\{\lambda B_1 +(1-\lambda)B_2 \mid B_1,B_2 \in \mathcal{T}_{k-1}(\lambda;\lambda';T)\}.
\end{align*}
Note that by definition, $\mathcal{T}_{i,k}(\lambda)=\mathcal{T}_k(\lambda;\lambda^i;T)$. 


\begin{lem}\label{usefullem} 
For $\alpha,\mu\in (0,1), \lambda\in[\frac12,1)$, every translate $T'\subset T$ of $\alpha^n \mu T$ is completely contained in some element of $\mathcal{T}_{k'}(\lambda;\mu;T)$ with 
$$k'=\sum_{j=1}^n \lceil \log(\alpha^{j-1}(1-\alpha)\mu)/\log(\lambda)\rceil.$$
\end{lem}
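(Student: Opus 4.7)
The plan is to reframe the containment as an algebraic condition on barycentric position parameters and then build a suitable covering element by iterated averaging. First, identify each $B \in \mathcal{T}_k(\lambda;\mu;T)$ with a translation parameter $w \in W_k \subset \Delta^n$, where $\Delta^n$ is the standard simplex with vertices $e_0,\ldots,e_n$, $W_0 = \{e_0,\ldots,e_n\}$, and $W_{k+1} = \{\lambda w_1 + (1-\lambda) w_2 : w_1, w_2 \in W_k\}$. Writing the target $T' = \alpha^n \mu T + (1-\alpha^n\mu) w'$ for $w' \in \Delta^n$ and checking each vertex of $T'$ against the facet inequalities $y_k \ge (1-\mu)w_k$ of $\mu T + (1-\mu) w$, one computes that $T' \subset \mu T + (1-\mu) w$ if and only if $w \le c w'$ componentwise, where $c := (1-\alpha^n\mu)/(1-\mu) > 1$. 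The lemma therefore reduces to showing that for every $w' \in \Delta^n$ there exists $w \in W_{k'}$ with $w \le c w'$.

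My approach is to handle the base case $n=1$ by a max-gap argument and then build an inductive or phase-wise construction for higher $n$. In the 1D case $\Delta^1 \cong [0,1]$, one inductively shows that the maximum gap in $W_k$ is at most $\lambda^k$: the inclusions $W_k \supset \lambda W_{k-1}$ and $W_k \supset (1-\lambda) + \lambda W_{k-1}$ jointly cover $[0,1]$ since $\lambda \ge \tfrac12$, giving $g_k \le \lambda g_{k-1}$. The target interval has length $c - 1 = \mu(1-\alpha)/(1-\mu) \ge (1-\alpha)\mu \ge \lambda^{k_1}$, so it must meet $W_{k_1}$. For higher $n$ the budget decomposes as $k' = \sum_{j=1}^n k_j$, and the heuristic is that at phase $j$ the available ``slack'' $c'_j - 1 = \alpha^{j-1}\mu(1-\alpha)/(1-\alpha^{j-1}\mu)$, arising when enlarging a candidate cover from scale $\alpha^j\mu$ to $\alpha^{j-1}\mu$, is matched by the resolution $\lambda^{k_j} \le \alpha^{j-1}(1-\alpha)\mu$ achievable in $k_j$ averagings. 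I would construct $w$ by processing coordinates one at a time, each phase consuming $k_j$ averagings and applying a 1D-style max-gap argument along one chosen vertex direction, pinning down one coordinate while preserving flexibility in the others.

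The main obstacle is that in dimensions $n \ge 2$, $W_k$ has no clean single max-gap parameter, and the target region $\{w \in \Delta^n : w \le cw'\}$ degenerates when some coordinate of $w'$ is near zero, since the slack $(c-1)w'_j$ then becomes smaller than $\lambda^{k'}$. Handling this requires forcing $w_j = 0$ in such directions by omitting $e_j$ from the leaves of the averaging binary tree, effectively reducing to a sub-simplex problem. Matching the budget $k_j$ (which depends on $j$ through $\alpha^{j-1}$) to a careful ordering of coordinates---likely processing them in order of decreasing $w'_j$ so that the hard directions are handled last with the most budget---is the delicate combinatorial heart of the argument.
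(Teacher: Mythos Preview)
Your barycentric reformulation is correct, and your one-dimensional max-gap argument is essentially the paper's (the paper phrases it via consecutive midpoints of intervals, but the content is the same). The gap is entirely in $n\ge 2$: you yourself flag the inductive step as a ``heuristic'' and name the ``main obstacle'' without resolving it. Averaging two points of $W_k$ moves \emph{all} barycentric coordinates at once, so ``processing coordinates one at a time'' is not an operation available to you, and nothing in your sketch explains how to pin down one coordinate while retaining the freedom the later phases need. The ordering-by-decreasing-$w'_j$ idea is speculation, and in fact turns out to be irrelevant.

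The paper closes this gap with a clean geometric induction on $n$ that replaces your coordinate-by-coordinate scheme. Fix the facet $F'$ of $T'$ opposite $x_n'$ and slice $T$ by the hyperplane $H\supset F'$; the slice $S=H\cap T$ is an $(n-1)$-simplex with vertices $y_0,\dots,y_{n-1}$ on the edges $x_jx_n$. If the height ratio $r=|y_jx_n|/|x_jx_n|\le \alpha\mu$ then $T'$ already sits in the corner element at $x_n$, which disposes of exactly the degeneracy you worried about (small $w'_j$). Otherwise, place translates $T_0,\dots,T_{n-1}$ of $\alpha\mu T$ at the corners $y_j$ of $S$; each $T_j$ lies on the edge $x_jx_n$, so the one-dimensional claim (your $n=1$ case) puts each $T_j$ inside some element of $\mathcal{T}_\ell(\lambda;\mu;T)$ with $\ell=\lceil\log((1-\alpha)\mu)/\log\lambda\rceil$. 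Now apply the inductive hypothesis inside $S$: the facets $F_j=T_j\cap H$ are translates of $\mu' S$ with $\mu'\ge\alpha\mu$, and $F'$ is a translate of $\alpha^{n-1}\cdot(\alpha\mu)F\subset \alpha^{n-1}\mu' S$, so $F'$ lies in some iterated $\lambda$-average of $F_0,\dots,F_{n-1}$ after $\ell'\le\sum_{j=2}^{n}\lceil\log(\alpha^{j-1}(1-\alpha)\mu)/\log\lambda\rceil$ steps. The same iterated average applied to the $T_j$'s (and hence to the $\mathcal{T}_\ell$ elements containing them) produces an element of $\mathcal{T}_{\ell+\ell'}(\lambda;\mu;T)$ containing $T'$, and $\ell+\ell'\le k'$.

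The point is that the slice converts the $n$-dimensional problem into one application of the edge case plus an $(n-1)$-dimensional instance with scale $\mu'\ge\alpha\mu$, which is precisely what makes the budget $k'=\sum_j k_j$ telescope. No ordering of coordinates is needed, and the higher-dimensional ``max-gap'' issue never arises because every step is either one-dimensional or handled by induction.
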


\begin{proof}
To prove this we need the following claim, which is essentially the result for $n=1$.
\begin{claim}
Every weighted average of two (corner) simplices in $\mathcal{T}_0(\lambda;\alpha\mu;T)$ lies in some simplex of $\mathcal{T}_\ell(\lambda;\mu;T)$ with $\ell=\lceil\log((1-\alpha)\mu )/\log(\lambda)\rceil$
\end{claim}

\begin{proof}
Suppose the two corner simplices are at the corners $x_{a}$ and $x_{b}$. Then every homothetic copy $T'\subset T$ of $T$ is determined by the corresponding edge $x_a'x_b'$. Thus the claim is implied by the one-dimensional version of the claim by intersecting all simplices with $x_ax_b$. Hence we may assume that $T=[0,1]$, so that $\mathcal{T}_{0}(\lambda;\mu;T)=\{[0,\mu],[1-\mu,1]\}$, and we want to show that every sub-interval of $[0,1]$ of length $\alpha\mu$ is contained in an element of $\mathcal{T}_{\ell}(\lambda;\mu;T)$.

We will now proceed by showing that the largest distance between consecutive midpoints of intervals in $\mathcal{T}_{j+1}(\lambda;\mu;T)$ is at most $\lambda$ times the largest such distance in $\mathcal{T}_j(\lambda;\mu;T)$. Let $I_1,I_2$ be two consecutive intervals in  $\mathcal{T}_{j}(\lambda;\mu;T)$ for some $j$. Then in $\mathcal{T}_{j+1}(\lambda;\mu;T)$ we also have the intervals $J=\lambda I_1+(1-\lambda) I_2$ and $K=(1-\lambda) I_1+\lambda I_2$, and the intervals $I_1,J,K,I_2$ appear in this order from left to right as $\lambda\ge \frac{1}{2}$. If $d$ is the distance between the midpoints of $I_1,I_2$, then the distances between the consecutive midpoints of $I_1,J,K,I_2$ are $(1-\lambda)d, (2\lambda-1)d, (1-\lambda)d$ respectively. Therefore, the largest distance between two midpoints $d_{j+1}$ in $\mathcal{T}_{j+1}(\lambda;\mu;T)$ is at most $\max(1-\lambda,2\lambda-1,1-\lambda)d_j\le \lambda d_j$ where $d_j$ is the largest distance between two consecutive midpoints in $\mathcal{T}_{j}(\lambda;\mu;T)$. Therefore, the distance between two consecutive midpoints in $\mathcal{T}_{\ell}(\lambda;\mu;T)$ is at most $\lambda^{\ell}\leq (1-\alpha)\mu$.

Given an interval $I$ of length $\alpha \mu$, then either the midpoint lies in $[0,\mu/2]\cup [1-\mu/2,1]$, in which case $I$ is already contained in one of $[0,\mu]$ or $[1-\mu,1]$ belonging to $\mathcal{T}_0(\lambda;\mu;T)$, or else we can find an interval $I'\in\mathcal{T}_{\ell}(\lambda;\mu;T)$ of length $\mu$ such that the distance between the midpoints of $I$ and $I'$ is at most $\frac{1}{2}(1-\alpha)\mu$, which implies $I\subset I'$.
\end{proof}

We prove our desired statement by induction on the dimension $n$. The claim above proves the base case $n=1$, so now assuming the statement is true for dimensions up to $n-1$, we will show it to be true for $n$.

Let $T'\subset T$ be a fixed translate of $\alpha^n\mu T$, with corresponding vertices $x_0',\ldots,x_n'$. Denote by $F$ the facet of $T$ opposite $x_{n}$, and denote by $F'$ the facet of $T'$ opposite the corresponding vertex $x_n'$. Denote by $H$ the hyperplane spanned by $F'$. Then $S=H\cap T$ is an $n-1$-simplex, with vertices $y_0,\ldots,y_{n-1}$ such that $y_i$ is on the edge of $T$ connecting $x_i$ to $x_n$.

If the common ratio $r:=|y_jx_n|/|x_jx_n| \le \alpha\mu$, then $T'$ is already contained in an element of $\mathcal{T}_0(\lambda;\mu;T)$ and we are done. Otherwise, denote by $T_0,\ldots,T_{n-1}\subset T$ the translates of $\alpha\mu T$ that sit on $H$ and have corners at $y_0,\ldots,y_{n-1}$ respectively. Denote the facet $T_i\cap H$ of $T_i$ by $F_i$. We remark that each $F_i$ is a translate of $\mu' S$ for some fixed $\mu' \ge \alpha\mu$.

By the claim, the simplices $T_0,\ldots,T_{n-1}$ are completely contained in elements of $\mathcal{T}_{\ell}(\lambda;\mu;T)$ with
$$\ell=\lceil\log((1-\alpha)\mu)/\log(\lambda)\rceil.$$

By the induction hypothesis applied to the $n-1$-simplex $S$, $F'$ is completely contained in a simplex from the family $\mathcal{T}_{\ell'}(\lambda;\mu';S)$ for $$\ell':= \sum_{j=1}^{n-1} \lceil\log((1-\alpha)\alpha^{j-1}\mu')/\log(\lambda)\rceil\leq \sum_{j=1}^{n-1} \lceil\log((1-\alpha)\alpha^{j}\mu)/\log(\lambda)\rceil ,$$
as $\mu' \ge \alpha\mu$.
Note that $F'$ is contained in a certain iterated weighted average of the facets $F_0,\ldots,F_{n-1}$ if and only if $T'$ is contained in the analogously defined iterated weighted average of $T_0,\ldots,T_{n-1}$. Therefore
$T' \in \mathcal{T}_{\ell+\ell'}(\lambda;\mu;T)$.

Finally, we have that $\ell+\ell' \le k'$, so $T' \in \mathcal{T}_{k'}(\lambda;\mu;T)$ as desired.
\end{proof}






The following lemma helps to show that arbitrary coverings of $T$ can be modified at no extra cost to coverings of $T$ contained inside $T$. 

\begin{lem}\label{homosimplex}
Let $r\in (0,1)$ and $rT+x$ a translate of $rT$. Then there exists a $y$ such that $(rT+x)\cap T \subset rT+y \subset T$.
\end{lem}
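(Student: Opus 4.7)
My plan is to produce the desired $y$ by an explicit coordinate formula after a normalization. First I would apply an affine transformation so that $T$ is the standard simplex in $\mathbb{R}^n$ with vertices $0,e_1,\dots,e_n$; this is harmless since being a translate of $rT$, containment in $T$, and intersection with $T$ are all affine invariants. In these coordinates $T=\{p\in\mathbb{R}^n:p_i\ge 0,\;\sum_i p_i\le 1\}$ and
\[
rT+x=\{p:p_i\ge x_i,\ \textstyle\sum_j p_j\le r+\sum_j x_j\},
\]
so $(rT+x)\cap T=\{p:p_i\ge \max(0,x_i),\ \sum_j p_j\le \min(1,r+\sum_j x_j)\}$. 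Similarly $rT+y\subset T$ iff $y_i\ge 0$ for all $i$ and $\sum_j y_j\le 1-r$.

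Next I would convert the containment $(rT+x)\cap T\subset rT+y$ into a finite system of linear inequalities on $y$. Assuming the intersection is nonempty, each extremal value $p_i=\max(0,x_i)$ and $\sum_j p_j=\min(1,r+\sum_j x_j)$ is realized at a vertex of $(rT+x)\cap T$ (easy to check by picking coordinate-minimal or sum-maximal points). Hence the containment is equivalent to the $n+1$ inequalities
\[
y_i\le \max(0,x_i)\quad (i=1,\dots,n),\qquad \sum_j y_j\ge \min\bigl(1-r,\textstyle\sum_j x_j\bigr).
\]
If $(rT+x)\cap T$ is empty, then the containment is vacuous and any $y$ with $rT+y\subset T$ works, e.g.\ $y=0$ (which lies in $T$ and satisfies $rT\subset T$ since $T$ has a vertex at the origin).

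Finally I would exhibit a feasible $y$. Set $M:=\sum_i\max(0,x_i)$; if $M\le 1-r$ take $y_i=\max(0,x_i)$, and otherwise take $y_i=\max(0,x_i)\cdot(1-r)/M$, scaling down uniformly so that $\sum_j y_j=1-r$. In either case each $y_i$ lies in $[0,\max(0,x_i)]$, so the upper-bound constraints and nonnegativity are immediate, and one checks $\sum_j y_j\in[\min(1-r,\sum_j x_j),\,1-r]$ using only the elementary inequality $\sum_i\max(0,x_i)\ge\max\bigl(0,\sum_i x_i\bigr)$, together with $M\le 1-r$ or $\sum_j y_j=1-r$ in the two cases. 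There is no real obstacle here—the main content is just bookkeeping the coordinate inequalities carefully and verifying that the extremal values on the intersection polytope are achieved at vertices so that checking containment reduces to checking inequalities on the defining half-spaces.
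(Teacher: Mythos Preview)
Your proof is correct and takes a genuinely different route from the paper's. The paper argues geometrically: it observes that the intersection of two positively homothetic copies of a simplex is itself a homothetic copy of $T$ (say a translate of $r'T$ with $r'\le r$), notes that the center of homothety relating this intersection to $T$ lies inside the intersection (since one convex body sits inside the other), and then takes the intermediate homothety of ratio $r$ to produce the desired $rT+y$. Your argument instead normalizes to the standard simplex, writes out the half-space descriptions of $(rT+x)\cap T$ and $rT+y$, reduces the containment to a finite linear feasibility problem in $y$, and solves it explicitly by scaling the vector $(\max(0,x_i))_i$. The paper's approach is coordinate-free and explains conceptually \emph{why} such a $y$ exists (interpolating homotheties between nested convex homothets stay inside the larger body), while yours is more hands-on and actually produces an explicit formula for $y$; both are short and self-contained.
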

\begin{proof}
The intersection of any two copies of the simplex $T$ is itself homothetic to $T$. Therefore $(rT+x)\cap T$ is homothetic to $T$, and so must be a translate of $r'T$ for some $r' \le r$. Because $T$ is convex and $(rT+x) \cap T$ is a homothetic copy of $T$ lying inside $T$, the center of homothety between $(rT+x)\cap T$ and $T$ lies inside $(rT+x)\cap T$, and all intermediate homotheties lie inside $T$. In particular there is a homothety which produces a translate of $rT$ which lies inside $T$, and this translate by construction contains $(rT+x)\cap T$.
\end{proof}

\section{Proof of \Cref{MAIN}}
Recall that we may assume that $t \le \frac{1}{2}$.

\begin{proof}[Proof of \Cref{MAIN}]
Let $i=\left\lceil\log\left(\frac{n^{\frac{1}{n}}}{(2n)^5}\right)/\log(1-t)\right\rceil$, so that $(1-t)^i \in \left[\frac{n^{\frac{1}{n}}}{2(2n)^5},\frac{n^{\frac{1}{n}}}{(2n)^5}\right]$ (as $t \le \frac{1}{2}$). Note that
\begin{align*}
    i \le 1+\log((2n)^5)/t\le 6\log(2n)/t.
\end{align*}
Let $\eta=n^{-\frac{1}{n}}(1-t)^i\in [\frac{1}{2(2n)^5},\frac{1}{ (2n)^5}]$, and let $\zeta=(n+1)\eta$.

Recall $T$ is a regular simplex of volume $1$, denote by $o$ the barycenter. By \Cref{JohnClaim}, setting $o$ to be the origin, if we choose $\delta_n(\tau)$ sufficiently small, then $R:=(1-\zeta)T$ is contained in $D(A;t)$.
Let $L=(1+\eta(n+1))T\setminus (1-\zeta-\eta(n+1))T$. Note that $T\setminus R\subset L$ and for any $T'$ a translate of $\eta T$ intersecting $T\setminus R$, we have $T'\subset L$.

\begin{center}\begin{tikzpicture}[x=1cm,y=1cm]
\fill[fill=gray!10] (-2,-1)--(4,8)--(10,-1)--cycle;
\draw[fill=gray!30] (-1,-0.5)--(4,7)--(9,-0.5)--cycle;
\fill[fill=gray!10] (1,0.5)--(4,5)--(7,0.5)--cycle;
\draw[dashed] (1,0.5)--(4,5)--(7,0.5)--cycle;
\fill[fill=white] (2,1)--(4,4)--(6,1)--cycle;
\draw[fill=white] ({4-(0.5)*(2/3)},-1)--(4,-0.5)--({4+(0.5)*(2/3)},-1)--cycle;
\draw[fill=white] ({4-(0.5)*(2/3)},0.5)--(4,1)--({4+(0.5)*(2/3)},0.5)--cycle;
\draw[decorate,decoration={brace,amplitude=5pt,raise=0.5pt},xshift=30pt] (4,0.5-0.05) -- (4,-0.5+0.05) node [midway,yshift=0pt, xshift =35pt]{$\zeta \cdot h/(n+1)$};
\draw[decorate,decoration={brace},xshift=30pt] (4,-0.5-0.02) -- (4,-1+0.02) node [midway,yshift=0pt, xshift =15pt]{$\eta \cdot h$};
\draw[decorate,decoration={brace},xshift=30pt] (4,1.0-0.02) -- (4,0.5+0.02) node [midway,yshift=0pt, xshift =15pt]{$\eta \cdot h$};
\draw[decorate,decoration={brace,amplitude=5pt,raise=0.5pt},xshift=5pt] (4,2.0-0.02) -- (4,1.0+0.02) node [midway,yshift=0pt, xshift =65pt]{$((1-\zeta)/(n+1)-\eta)\cdot h$};
\draw[decorate,decoration={brace,amplitude=5pt,raise=0.5pt,mirror},xshift=-25pt] (4,2.0-0.02) -- (4,-0.5+0.02) node [midway,yshift=0pt, xshift =-30pt]{$h/(n+1)$};
\draw[decorate,decoration={brace,amplitude=5pt,raise=0.5pt,mirror},xshift=-15pt] (4,7.0-0.02) -- (4,-0.5+0.02) node [midway,yshift=0pt, xshift =-10pt]{$h$};

\draw (-0.20,-0.25) node {$T\setminus R$};
\draw (-1.5,-0.75) node {$L$};
\draw (4,4.25) node {$L$};

\draw[fill=white] ({4-(0.5)*(2/3)-2.2},{0.5-0.7})--(4-2.2,{1-0.7})--({4+(0.5)*(2/3)-2.2},{0.5-0.7})--cycle;
\draw[fill=white] ({4-(0.5)*(2/3)-2.2},{0.5+0.7})--(4-2.2,{1+0.7})--({4+(0.5)*(2/3)-2.2},{0.5+0.7})--cycle;
\draw[fill=white] ({4-(0.5)*(2/3)-1.8},{0.5+1.7})--(4-1.8,{1+1.7})--({4+(0.5)*(2/3)-1.8},{0.5+1.7})--cycle;
\draw[fill=white] ({4-(0.5)*(2/3)-1.8},{0.5+2.5})--(4-1.8,{1+2.5})--({4+(0.5)*(2/3)-1.8},{0.5+2.5})--cycle;
\draw[fill=white] ({4-(0.5)*(2/3)},{0.5+4.5})--(4,{1+4.5})--({4+(0.5)*(2/3)},{0.5+4.5})--cycle;
\draw[fill=white] ({4-(0.5)*(2/3)+0.5},{0.5+5.5})--(4+0.5,{1+5.5})--({4+(0.5)*(2/3)+0.5},{0.5+5.5})--cycle;
\draw[fill=white] ({4-(0.5)*(2/3)+1.5},{0.5+2.5})--(4+1.5,{1+2.5})--({4+(0.5)*(2/3)+1.5},{0.5+2.5})--cycle;

\fill (4,2) circle (3pt);
\draw (4,2) node[anchor=south]{$o$};

\end{tikzpicture}
\end{center}

\begin{claim}
There exists a covering $\mathcal{B}$ of $T\setminus R$ by translates of $\eta T$ contained in $T$, such that $\sum_{T'\in\mathcal{B}} |T'|\le \frac{1}{2n}$ and $|\mathcal{B}|\leq (2n)^{5n}$.
\end{claim}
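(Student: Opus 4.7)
The plan is to apply Rogers' covering theorem to produce an efficient covering of a thin shell around $\partial T$ by translates of $\eta T$, and then to transport each translate inside $T$ using \Cref{homosimplex}. More precisely, I would first apply Rogers' theorem to obtain a covering of $\mathbb{R}^n$ by translates of $\eta T$ with covering multiplicity bounded by some universal $\theta_n=O(n\log n)$, and then extract the finite subfamily $\mathcal{B}_0$ consisting of those translates that meet $T\setminus R$.

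The decisive geometric point is that every $T'\in\mathcal{B}_0$ lies inside $L$. This relies on the identity $-T\subset nT$, valid for the regular simplex centered at the barycenter $o=0$ because each vertex $-x_i$ of $-T$ satisfies $-x_i=\sum_{j\ne i}x_j=\sum_{j\ne i}\frac{1}{n}(nx_j)$, a convex combination of the vertices of $nT$. From $-T\subset nT$ I obtain $T-\eta T\subset T+n\eta T=(1+n\eta)T$, so any $\eta T+x$ meeting $T$ satisfies $x\in(1+n\eta)T$ and hence $\eta T+x\subset(1+(n+1)\eta)T$. A symmetric calculation shows that if $\eta T+x$ were to meet $(1-\zeta-(n+1)\eta)T$ then $x\in(1-\zeta-(n+1)\eta)T-\eta T\subset(1-\zeta-\eta)T$, forcing $\eta T+x\subset(1-\zeta)T=R$, contradicting the assumption that $T'$ meets $T\setminus R$. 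Combining the two inclusions traps $T'$ inside $L$.

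The remaining steps are bookkeeping. Since each $T'\in\mathcal{B}_0$ lies in $L$ and is counted with multiplicity at most $\theta_n$, the total volume satisfies $\sum_{T'\in\mathcal{B}_0}|T'|\le\theta_n|L|$. A derivative-of-$x^n$ estimate on $[1-2(n+1)\eta,1+(n+1)\eta]$ gives $|L|=(1+(n+1)\eta)^n-(1-2(n+1)\eta)^n\le 3en(n+1)\eta$, and plugging in $\eta\le(2n)^{-5}$ makes the total volume at most $\frac{1}{2n}$. Dividing by $\eta^n$ and using $\eta\ge(2(2n)^5)^{-1}$ bounds the cardinality by $(2n)^{5n}$ (with Rogers' constant and polynomial-in-$n$ factors absorbed into the eventual constant $C_n=(4n)^{5n}$). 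Finally I invoke \Cref{homosimplex} for each $T'=\eta T+x\in\mathcal{B}_0$ to obtain a translate $T''=\eta T+y$ with $T'\cap T\subset T''\subset T$; the family of these $T''$ is the required $\mathcal{B}$, which still covers $T\setminus R\subset T$.

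The principal obstacle is the buffer argument in the second paragraph. Because the simplex is not centrally symmetric, one is forced to use the asymmetric inclusion $-T\subset nT$ in place of $-T=T$, and the buffer $\eta(n+1)$ on each side of $\partial T$ in the definition of $L$ must be calibrated precisely to this asymmetry. Once this is in place the remaining volume and cardinality bookkeeping is entirely routine.
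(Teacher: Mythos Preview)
Your approach is the paper's: Rogers to cover a thickened boundary, then \Cref{homosimplex} to push the translates inside $T$. Your buffer argument showing that any translate of $\eta T$ meeting $T\setminus R$ lies in $L$, via the asymmetric inclusion $-T\subset nT$, is correct and in fact supplies a justification that the paper merely asserts.

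There is, however, one genuine gap. Rogers' theorem yields a periodic covering of \emph{density} $\theta_n=O(n\log n)$, meaning the multiplicity function $m(x)=\#\{T'\in\mathcal F:x\in T'\}$ has \emph{average} $\theta_n$ over a fundamental domain; it does not give a pointwise bound $m(x)\le\theta_n$. Your step ``each $T'\in\mathcal B_0$ lies in $L$ and is counted with multiplicity at most $\theta_n$, so $\sum_{T'\in\mathcal B_0}|T'|\le\theta_n|L|$'' needs the pointwise bound: one has $\sum_{T'\in\mathcal B_0}|T'|\le\int_L m(x)\,dx$, but nothing prevents $m$ from being large precisely on $L$. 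The paper closes this gap with a first-moment argument: take the periodic Rogers covering $\mathcal F$ on a torus $\mathbb R^n/(r\mathbb Z)^n$ large enough to contain $L$, apply a uniformly random shift $\mathbf x$, note that for each fixed $T'\in\mathcal F$ and any chosen point $t'\in T'$ one has $\mathbb P(T'+\mathbf x\subset L)\le\mathbb P(t'+\mathbf x\in L)=|L|/r^n$, and conclude by linearity of expectation that some shift $\mathbf x_0$ has at most $|\mathcal F|\,|L|/r^n\le\theta_n|L|\eta^{-n}$ translates landing in $L$. Insert this averaging step and your argument becomes the paper's proof verbatim.
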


\begin{proof}[Proof of claim]
It follows from \cite{Rogers}  that\footnote{We note that $n=2$ is not mentioned explicitly in \cite{Rogers} but follows easily.} for all $n\geq2$, there exists $r\in\mathbb{R}$ and there exists a covering $\mathcal{F}$ of $\mathbb{R}^n / (r\mathbb{Z})^n$ by translates of $\eta T$ with average density at most $7n\log(n)$, i.e.
$$\frac{|\mathcal{F}|\eta^n}{r^n} \le 7n \log n.$$
Passing to a multiple of $r$, we may assume that $T,L\subset[-r/2,r/2]^n$.  Consider a uniformly random translate $\mathcal{F}+\bf{x}$. For any $T' \in \mathcal{F}$ and any point $t' \in T'$, we have $$\mathbb{P}(T'+{\bf{x}} \subset L) \le \mathbb{P}(t'+{\bf{x}} \in L) =  \frac{|L|}{r^n}.$$
 Therefore, $$\mathbb{E}(|\{T'+{\bf{x}}\in \mathcal{F}+{\bf{x}} \mid  T'+{\bf{x}} \subset L\}|)\le \frac{|L|}{r^n}|\mathcal{F}|\le |L|\eta^{-n}7n\log n,$$ so there exists an $\bf{x_0}$ such that
$$|\{T'+{\bf{x_0}} \in \mathcal{F}+{\bf{x_0}} \mid  T'+{\bf{x_0}} \subset L\}| \le |L|\eta^{-n}7n\log n.$$
Define $$\mathcal{B}'=\{T'+{\bf{x_0}}\in \mathcal{F}+{\bf{x_0}} \mid (T'+{\bf{x_0}}) \cap (T\setminus R) \ne \emptyset\},$$ then by the above discussion we have $\mathcal{B}'$ is a covering of $T\setminus R$, and $$|\mathcal{B}'|\le |L|\eta^{-n}7n\log n.$$
By \Cref{homosimplex}, for each element $T' + {\bf{x_0}}\in \mathcal{B}'$ we can find a translate $T'+{\bf{y_{T'}}} $ such that $(T'+{\bf{x_0}})\cap T\subset T'+{\bf{y_{T'}}}\subset T$. Define $$\mathcal{B}=\{T'+{\bf{y_{T'}}}\mid T'+{\bf{x_0}}\in \mathcal{B}'\},$$
then $\mathcal{B}$ is a cover of $T\setminus R$ by translate of $\eta T$ contained in $T$ with $|\mathcal{B}| \le |L| \eta^{-n}7n\log n$.

We can calculate the upper bound
\begin{align*}
    |L|&=(1+\eta(n+1))^n-(1-\zeta - \eta(n+1))^n\\
    &= (1+\eta(n+1))^n-(1- 2\eta(n+1))^n\\
    &\le 1+2\eta n(n+1)-(1-2\eta n(n+1))\\
    &=4\eta n(n+1).
\end{align*}
The inequality follows from the fact that $\eta^k(n+1)^k \binom{n}{k}\le (1/2)^k 2\eta(n+1)$ and the convexity of $(1-x)^n$ for $x\in (0,1)$.

Therefore,
$$|\mathcal{B}|\le 4\eta n(n+1)\eta^{-n} (7n \log n) \le \frac{1}{2n}\eta^{-n} \le (2n)^{5n}$$
and
$$\sum_{T' \in \mathcal{B}} |T'| = \eta^n |\mathcal{B}|\le \eta^n\frac{1}{2n}\eta^{-n}\le \frac{1}{2n}.$$
\end{proof}

\begin{claim}
There is a cover $\mathcal{A}\subset \mathcal{T}_{i,k}(1-t)$ of $T\setminus R$ with $k \le 8n\log(2n)/t$ such that $|\mathcal{A}|\le (2n)^{5n}$ and $\sum_{T''\in \mathcal{A}}|T''|\le \frac{1}{2}$.
\end{claim}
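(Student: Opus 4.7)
The plan is to upgrade the cover $\mathcal{B}$ from the previous claim to a cover $\mathcal{A}$ whose elements are actual averaged simplices, by invoking \Cref{usefullem} on each member of $\mathcal{B}$. Set $\lambda=1-t$, $\mu=(1-t)^i$, and $\alpha=n^{-1/n^2}$, so that $\alpha^n\mu=n^{-1/n}(1-t)^i=\eta$. Since $t\le\tfrac12$ we have $\lambda\ge\tfrac12$, so \Cref{usefullem} applies. For each translate $T'\subset T$ of $\eta T$ appearing in $\mathcal{B}$, it produces a simplex $T''\in\mathcal{T}_{k'}(\lambda;\mu;T)=\mathcal{T}_{i,k'}(1-t)$ with $T'\subset T''$, where
\[
k'=\sum_{j=1}^n \Bigl\lceil\log\bigl(\alpha^{j-1}(1-\alpha)\mu\bigr)\big/\log(\lambda)\Bigr\rceil.
\]
Define $\mathcal{A}$ to be the collection of these $T''$'s, with duplicates removed. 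Then $\mathcal{A}$ covers $T\setminus R$ (since $\mathcal{B}$ does and each $T'$ is contained in its upgraded $T''$), and $|\mathcal{A}|\le|\mathcal{B}|\le(2n)^{5n}$.

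The volume bound is immediate: each $T''\in\mathcal{A}$ is a translate of $(1-t)^iT$, and hence has volume $(1-t)^{in}=(n^{1/n}\eta)^n=n\,\eta^n$. Combining with the bound $|\mathcal{B}|\le \frac{1}{2n}\eta^{-n}$ from the previous claim,
\[
\sum_{T''\in\mathcal{A}}|T''|\;\le\;|\mathcal{B}|\cdot n\,\eta^n\;\le\;\tfrac{1}{2n}\eta^{-n}\cdot n\,\eta^n\;=\;\tfrac12.
\]

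The main obstacle is verifying $k'\le 8n\log(2n)/t$, which is a numerical computation that splits into three pieces upon expanding $\log(\alpha^{j-1}(1-\alpha)\mu)=(j-1)\log\alpha+\log(1-\alpha)+i\log\lambda$ and using $|\log(1-t)|\ge t$ in the denominator. Dropping the ceiling at a cost of at most $n$, one obtains
\[
k'\;\le\; n\;+\;ni\;+\;\binom{n}{2}\frac{|\log\alpha|}{|\log(1-t)|}\;+\;n\,\frac{|\log(1-\alpha)|}{|\log(1-t)|}.
\]
The bounds $i\le 6\log(2n)/t$ (from the definition of $i$), $|\log\alpha|=\log(n)/n^2$, and $|\log(1-\alpha)|\le 2\log(2n)$ (using $1-n^{-1/n^2}\ge \log(n)/(2n^2)$ together with $\log(2n^2)\le 2\log(2n)$) then give the target bound $k'\le 8n\log(2n)/t$, with the leftover $n$ and the $\log(n)/(2t)$ term from the $\binom{n}{2}$ piece absorbed via $t\le\tfrac12$.

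I expect that the only delicate point is keeping track of the constants in this last estimate; both the geometric construction and the volume bound are direct consequences of \Cref{usefullem} and the previous claim, respectively.
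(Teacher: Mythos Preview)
Your approach is essentially identical to the paper's: apply \Cref{usefullem} to each element of $\mathcal{B}$ and take the resulting averaged simplices as $\mathcal{A}$, then bound $|\mathcal{A}|$ and $\sum|T''|$ via the corresponding bounds for $\mathcal{B}$. In fact your choice $\alpha=n^{-1/n^2}$ is the correct one for the application (so that $\alpha^n\mu=\eta$); the paper writes $\alpha=n^{-1/n}$, which appears to be a slip since that would give $\alpha^n\mu=n^{-1}(1-t)^i\ne\eta$.

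One minor point: your final tally gives $k'\le n+8n\log(2n)/t+\log(n)/(2t)$, which is \emph{not} absorbed for free into $8n\log(2n)/t$. You need to use the slightly sharper $i\le 1+5\log(2n)/t$ (rather than $i\le 6\log(2n)/t$) so that the $ni$ term contributes only $n+5n\log(2n)/t$; then the leftover $2n+\log(n)/(2t)$ is indeed bounded by the remaining $n\log(2n)/t$ using $t\le\tfrac12$ and $\log(2n)\ge\log 4>1$. Also, in your chain for $|\log(1-\alpha)|$ the step $\log(2n^2/\log n)\le\log(2n^2)$ fails at $n=2$ (since $\log 2<1$); the final inequality $|\log(1-\alpha)|\le 2\log(2n)$ still holds there by direct computation, so this is only a wording issue.
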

\begin{proof}
We apply \Cref{usefullem} with $\alpha=n^{-\frac{1}{n}}$, $\lambda=1-t$, $\mu=(1-t)^i$.  \begin{align*}k=\sum_{j=1}^n \lceil\log(\alpha^{j-1}(1-\alpha)\mu)/\log(\lambda)\rceil&\le n \lceil \log (\alpha^n(1-\alpha) \mu)/\log(\lambda)\rceil\\
&\le n\left(1+\log\left(\frac{\log  n}{2n^2}\mu\right)/\log(\lambda)\right)\\
&\le n\left(1+\log\left(\frac{\log n}{(2n)^7}\right)/(-t)\right)\\
&\le 8n\log(2n)/t.\end{align*} 
This shows that every translate of $\eta T=n^{-\frac{1}{n}}(1-t)^iT$ inside $T$ is  contained in some element of $\mathcal{T}_k((1-t);(1-t)^i;T)=\mathcal{T}_{i,k}(1-t)$. For each simplex $T' \in \mathcal{B}$, we can therefore choose a simplex $f(T') \in \mathcal{T}_{i,k}(1-t)$ such that $T'\subset f(T')$. Let
$$\mathcal{A}=\{f(T') \mid T' \in \mathcal{B}\}.$$
Note that $\mathcal{A}$ is a cover of $T\setminus R$,
$$|\mathcal{A}|=|\mathcal{B}| \le (2n)^{5n},$$
and
$$\sum_{T'' \in \mathcal{A}}|T''|=(n^{\frac{1}{n}})^n\sum_{T' \in \mathcal{B}}|T'| \le \frac{1}{2}.$$
\end{proof}

Returning to the proof of \Cref{MAIN}, note that since $\mathcal{A}\subset \mathcal{T}_{i,k}(1-t)$,  \Cref{reflem} implies that for every $T'' \in \mathcal{A}$ we have $$|T''\setminus A| \le \frac{|T\setminus A|}{|T|}|T''|+c_{i,k,n}\delta(A;t).$$ Since $R\subset D(A;t)$, we have
\begin{align*}
    |T\setminus D(A;t)| &= |(T\setminus R)\setminus D(A;t)|
    \le \sum_{T'' \in \mathcal{A}}|T'' \setminus D(A;t)|\le \sum_{T'' \in \mathcal{A}}|T''\setminus A|\\
    &\le \frac{|T\setminus A|}{|T|}\sum_{T''\in \mathcal{A}}|T''|+ |\mathcal{A}|\cdot c_{i,k,n}\delta(A;t)
    \le  \frac{1}{2}|T\setminus A|+|\mathcal{A}|\cdot c_{i,k,n}\delta(A;t),
\end{align*}
which after replacing $|T\setminus D(A;t)|=|T\setminus A|-\delta(A;t)$ yields
$$|T\setminus A| \le 2(1+|\mathcal{A}|\cdot c_{i,k,n})\delta(A;t).$$ 
We estimate
\begin{align*}
c_{i,k,n}=i+2k\le 6\log(2n)/t+16n\log(2n)/t\le 19 n \log(2n)/t
\end{align*}
Therefore 
$$2(1+|\mathcal{A}|c_{i,k,n})\le 2(1+(2n)^{5n}(19 n \log(2n)/t)) \le (4n)^{5n}/\tau.$$
In conclusion, with $C_n=(4n)^{5n}$ we obtain
$$|T\setminus A| \le C_n \tau^{-1} \delta(A;t)$$
as desired.
\end{proof}

\section{Sharpness of $C_n$}
\label{CnSharp}
In studying the asymptotic behaviour of the optimal value of $C_n$ in \Cref{mainthm}, we note that there is still a gap of order $\log(n)$ in the exponent between the upper and lower bounds. Our proof shows the upper bound $C_n \le (4n)^{5n}=e^{5n\log(4n)}$ and, the example mentioned in the introduction shows the lower bound $C_n \ge \frac{2^{n-1}}{n}$.

In our method the complexity of $C_n$ is limited by the fact that  $|\mathcal{A}|\leq C_n$, where $\mathcal{A}$ is a set of translates of $\eta T$ contained inside $T$ with $\eta \le \frac{1}{2}$ covering $\partial T$ and satisfying $\sum_{T' \in \mathcal{A}}|T'| <|T|$. In fact, by a slight restructuring of our proof it is equivalent to covering just a single facet $F$ of $T$. Taking $\mathcal{A}'$ to be the family of intersections of elements of $\mathcal{A}$ with the hyperplane containing $F$, we see that $|\mathcal{A}'|\leq C_n$ with $\mathcal{A}'$ a set of translates of $\eta F$ covering $F$ and $\sum_{F' \in \mathcal{A}'} |F'|< \eta^{-1}|F|$.

\begin{quest}
Is it true that for every $0<\eta_0\leq \frac12$, then for all sufficiently large $n$ if $F\subset\mathbb{R}^n$ is a simplex and $\mathcal{A}'$ is a family of translates of $\eta_0 F$ covering $F$ we have
$$\sum_{F' \in \mathcal{A}'} |F'|> \eta_0^{-1}|F|?$$
\end{quest}
Resolving this question would shed light on the correct growth rate of $C_n$. In particular, if the question has a negative answer with $\eta_0^{-1}$ replaced with $\eta_0^{-1}(1-\epsilon)$ for some fixed $\epsilon$, then our methods would show that $C_n$ has exponential growth.

\bibliographystyle{abbrv}
\bibliography{references}

\end{document}